\author{Richard Oberlin}  
\title{Unions of lines in $F^n$}
\subjclass[2010]{42B25, 52C17}
\keywords{Kakeya, Besicovitch}
\newcommand{\hdim}{\mathop{\mathrm{hdim}}}
\newcommand {\rea}{\mathbb{R}}
\newcommand {\spa}{\mathop{\mathrm{span}}}
\newcommand {\BBZ}{\mathbb{Z}}
\def\U{{U}}
\def\P{{\bf P}}
\def\L{{\bf L}}
\def\<{\left\langle}
\def\>{\right\rangle}
\newtheorem{theorem}{Theorem}[section]
\newtheorem{conjecture}[theorem]{Conjecture}
\newtheorem{proposition}[theorem]{Proposition}
\newtheorem{lemma}[theorem]{Lemma}
\begin{document}
\begin{abstract}
We show that if a collection of lines in a vector space over a finite field has ``dimension'' at least $2(d-1) + \beta,$ then its union has ``dimension" at least $d + \beta.$ This is the sharp estimate of its type when no structural assumptions are placed on the collection of lines. We also consider some refinements and extensions of the main result, including estimates for unions of $k$-planes. 
\end{abstract}
\maketitle
\section{Introduction}
The main problem we will consider here is to give a lower bound for the dimension of the union of a collection of lines in terms of the dimension of the collection of lines, without imposing a structural hypothesis on the collection (in contrast to the Kakeya problem where one assumes that the lines are direction-separated, or perhaps satisfy the weaker ``Wolff axiom''). 

Specifically, we are motivated by the following conjecture of D. Oberlin ($\hdim$ denotes Hausdorff dimension).
\begin{conjecture}
Suppose $d \geq 1$ is an integer, that $0 \leq \beta \leq 1,$ and that $\L$ is a collection of lines in $\rea^n$ with $\hdim(\L) \geq 2(d-1) + \beta.$ Then
\begin{equation} \label{dimrange}
\hdim(\bigcup_{L \in \L}L) \geq d + \beta.
\end{equation}
\end{conjecture}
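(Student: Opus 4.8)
The statement is a Hausdorff--dimension bound, so the plan is to replace it with a discretized single--scale inequality about thin tubes, prove that inequality, and convert back. By Frostman's lemma, since $\hdim(\L) \geq 2(d-1)+\beta =: s$, we may fix a probability measure $\mu$ carried by $\L$---regarded as a subset of the affine Grassmannian $A(n,1)$ of lines, a manifold of dimension $2(n-1)$---satisfying $\mu(B(L_0,r)) \lesssim r^s$ for every metric ball. Localizing to the unit ball of $\rea^n$ (harmless for a dimension bound) and thickening each line to its $\delta$--neighborhood converts the Frostman bound into the combinatorial statement that every $r$--ball in tube space carries $\lesssim (r/\delta)^s$ essentially distinct $\delta$--tubes. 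By a standard pigeonholing over dyadic scales together with a Frostman--measure construction on the union, $(\ref{dimrange})$ reduces to the uniform single--scale estimate that, for every $\delta = 2^{-j}$, the union of such a tube family meets $\gtrsim \delta^{-(d+\beta)}$ of the $\delta$--balls in a fixed lattice.

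First I would attack this single--scale estimate by a bush/hairbrush argument driven by the dimension hypothesis on $\mu$. If the union met too few $\delta$--balls, then some ball $B$ would be met by an abnormally large bush of tubes; the Frostman condition on $\mu$ forces the directions of these tubes to spread over a set of controlled dimension, and a spread bush fills many $\delta$--balls away from $B$, contradicting the assumed smallness. A crude bush bound will not reach the exponent $d+\beta$, so I would iterate it through an induction on scales (passing from scale $\delta$ to $\delta^{1/2}$ and tracking how the tube family refines), which is the standard route for upgrading a bush estimate to a sharp one. That the target exponent is exactly $d+\beta$, with no slack, is dictated by the extremal configuration in which every line lies in one of a $\beta$--dimensional family of translates of a fixed $d$--plane: there $\hdim(\L) = 2(d-1)+\beta$ while the union has dimension exactly $d+\beta$.

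An alternative, more analytic, formulation replaces the covering count by a Riesz energy. Setting $\nu = \int_{\L} \mathcal{H}^1|_{L}\, d\mu(L)$, a measure supported on $\bigcup_{L \in \L} L$, it suffices to show that the $t$--energy
\begin{equation}
I_t(\nu) = \int_{\L} \int_{\L} k_t(L,L')\, d\mu(L)\, d\mu(L'), \qquad k_t(L,L') = \int_{L} \int_{L'} |x-y|^{-t}\, dx\, dy,
\end{equation}
is finite for every $t < d+\beta$, which yields $\hdim(\bigcup_{L \in \L} L) \geq d+\beta$. The kernel $k_t(L,L')$ blows up precisely as $L$ and $L'$ become close and nearly parallel, so finiteness of $I_t(\nu)$ is exactly an estimate on how much $\mu \times \mu$ mass sits on near--coincident, near--parallel pairs; the Frostman exponent $s = 2(d-1)+\beta$ is calibrated to control this.

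The main obstacle, in either formulation, is the same, and it is what places the statement beyond classical Kakeya technology: there is no direction--separation or Wolff--type axiom, so $\L$ is free to concentrate inside pencils of lines or inside lower--dimensional affine planes. One must show that the $2(d-1)+\beta$--dimensional Frostman hypothesis alone already prevents enough concentration---bounding both the overlap multiplicity of the $\delta$--tubes in the bush argument and the near--parallel $\mu \times \mu$ mass in the energy argument---to produce the sharp exponent. Because the extremal example leaves no room to spare, every overlap estimate and every step of the scale induction must be carried out without loss, and making the bush/slicing mechanism that efficient using only the dimension of $\L$ is the crux.
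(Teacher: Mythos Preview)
The paper does not prove this statement: it is presented as Conjecture~1.1, attributed to D.~Oberlin, and the text explicitly says that the case $d=1$ follows from standard Fourier-analytic methods while the conjecture is \emph{open} for $d>1$. The paper's contribution is the finite-field analogue (Theorem~1.2), proved by an iterated hairbrush argument; no proof of the Euclidean conjecture is given or claimed.

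Your proposal is not a proof either, and you are candid about this at the end. What you have written is a reasonable research outline---Frostman discretization, a single-scale tube inequality, bush/hairbrush plus induction on scales, or alternatively a Riesz-energy computation on the line measure---but none of these steps is carried out, and the final paragraph correctly identifies why they cannot be completed with current technology: absent any Wolff-type separation hypothesis, the Frostman bound on $\mu$ alone must control the full overlap multiplicity of the tubes (or equivalently the near-coincident mass of $\mu\times\mu$), and no known argument does this sharply for $d>1$. The energy approach you sketch is essentially the ``standard Fourier-analytic method'' the paper alludes to for $d=1$; the kernel $k_t(L,L')$ is integrable against $\mu\times\mu$ precisely when the Frostman exponent exceeds $t-1$, which gives $t<d+\beta$ only when $d=1$. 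For $d\geq 2$ the two-line kernel integration loses a full dimension, and your bush/induction-on-scales suggestion, while natural, runs into the same wall: a hairbrush rooted at a popular tube need not have direction-spread bounded below, because $\L$ may live entirely inside a pencil of nearly-parallel lines within a $d$-plane, and the Frostman condition with exponent $2(d-1)+\beta$ permits this.

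In short: there is no proof in the paper to compare against, and your proposal is an accurate survey of the natural attacks together with an honest statement of why they stall, not a proof.
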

The bound \eqref{dimrange}, if true, would be sharp, as one can see by taking $\L$ to be the set of lines contained in the $d$-planes belonging to  a $\beta$-dimensional family of $d$-planes. (Furthermore, there is nothing to be gained by taking $1 < \beta \leq 2$ since the dimension of the set of lines contained in a $d+1$-plane is $2(d-1) + 2$.)

Standard Fourier-analytic methods show that \eqref{dimrange} holds for $d=1$, but the conjecture is open for $d > 1.$ As a model problem, one may consider an analogous question where $\rea^n$ is replaced by a vector space over a finite-field. Our main result is that the corresponding conjecture holds for all $d$ ($|\cdot|$ denotes cardinality).

\begin{theorem} \label{firstlinethm}
Suppose $d \geq 1$ is an integer, $F$ is a finite field, $0 \leq \beta \leq 1,$ and that $\L$ is a collection of lines in $F^n$ with $|\L| \geq |F|^{2(d-1) + \beta}.$ Then
\begin{equation} \label{dimrangeff} 
|\bigcup_{L \in \L}L| \gtrsim |F|^{d + \beta}
\end{equation}
where the implicit constant may depend on $d$, but is independent\footnote{The constant is also independent of $\beta$ and $n$, but this is only of secondary interest.} of $F$.
\end{theorem}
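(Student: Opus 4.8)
The plan is to argue by induction on $d$, with a secondary induction on the ambient dimension $n$; throughout write $q = |F|$, $U = \bigcup_{L \in \L} L$, $M = |U|$ and $N = |\L| \geq q^{2(d-1)+\beta}$. For the base case $d = 1$ one has $\beta \in [0,1]$ and $N \geq q^{\beta}$, and the bound $M \gtrsim q^{1+\beta}$ follows from a Cauchy--Schwarz/double-counting argument: each line carries $q$ points of $U$, so $\sum_{x \in U}\mu(x) = Nq$ where $\mu(x)$ counts the lines through $x$, while the fact that two distinct lines meet in at most one point gives $\sum_{x}\binom{\mu(x)}{2} \leq \binom{N}{2}$; combining these with Cauchy--Schwarz yields $M \gtrsim Nq^2/(N+q) \gtrsim q^{1+\beta}$. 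The inductive engine for $d > 1$ rests on two observations. First, if some direction $v$ occurs among at least $q^{(d-1)+\beta}$ of the lines, then those lines are parallel, hence pairwise disjoint, and their union already has $\geq q^{(d-1)+\beta}\cdot q = q^{d+\beta}$ points; so I may assume every direction occurs at most $q^{(d-1)+\beta}$ times, which forces the directions to be numerous. Second, and this is the main mechanism, I will slice $F^n$ by a pencil of parallel hyperplanes and recurse.

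Concretely, fix a linear hyperplane $W$ and partition $F^n$ into the $q$ cosets $H_1,\dots,H_q$ of $W$. A line is contained in one of these cosets exactly when its direction lies in $W$ (call such lines $W$-parallel), and every $W$-parallel line lands in precisely one coset. A given direction lies in a fraction $\approx 1/q$ of all linear hyperplanes, so averaging over $W$ produces a choice for which at least $N/q$ of the lines are $W$-parallel. For this $W$, let $a_j$ be the number of lines of $\L$ contained in $H_j$, so $\sum_j a_j \geq N/q = q^{2(d-1)+\beta - 1}$. Applying the case $d-1$ of the theorem inside each coset $H_j \cong F^{n-1}$ to the $a_j$ lines it contains, and using that the cosets are disjoint, gives $M \geq \sum_j |U \cap H_j| \gtrsim \sum_j \phi(a_j)$, where $\phi(a)$ denotes the $(d-1)$-dimensional union bound for $a$ lines. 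When the $W$-parallel lines are distributed roughly evenly, each coset receives about $N/q^2 = q^{2(d-2)+\beta}$ lines, the inductive hypothesis yields $|U \cap H_j| \gtrsim q^{(d-1)+\beta}$ per coset, and summing over the $q$ disjoint cosets produces exactly $q\cdot q^{(d-1)+\beta} = q^{d+\beta}$. A direct check shows this is precisely the bookkeeping realized by the extremal family of lines filling a $\beta$-dimensional family of $d$-planes, which is reassuring.

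The step I expect to be the main obstacle is exactly the distribution of the $W$-parallel lines among the cosets: the estimate above degrades if these lines concentrate in only a few cosets, since the $(d-1)$-dimensional bound saturates at $q^d$ per coset and one then loses the factor $q^{\beta}$. To handle this I would set up a dichotomy. If a single coset $H_j \cong F^{n-1}$ already contains at least $q^{2(d-1)+\beta}$ lines, I invoke the theorem for the same $d$ in dimension $n-1$ (the secondary induction) and finish immediately. Otherwise the $W$-parallel lines are too sparse in every coset to trigger this, and I must show that the slicing can be arranged to spread them out. The key leverage is that concentration of the $W$-parallel lines into few cosets forces a large population of lines transverse to $W$, and such a transverse family is itself a full $d$-dimensional collection crossing every coset of the pencil; quantifying this tension---either by choosing $W$ more carefully from the many admissible hyperplanes, or by iterating the slicing and tracking the resulting disjointness---is where the real work lies. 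Making this balancing act quantitative, uniformly in $F$ and (per the footnote) ultimately in $n$, is the crux of the argument.
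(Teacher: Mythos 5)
Your base case and the parallel-hyperplane slicing are fine as far as they go, but the argument is incomplete at exactly the point you yourself flag as ``where the real work lies,'' and that point is not a technicality --- it is the entire difficulty of the theorem. Concretely: the averaging over $W$ guarantees only about $|\L|/|F| = |F|^{2(d-1)+\beta-1}$ $W$-parallel lines, and nothing in your argument prevents these from all lying in a single coset $H_1$ of $W$. In that event the $(d-1)$-level bound inside $H_1$ saturates at $|F|^{d}$, losing the factor $|F|^{\beta}$; your secondary induction on $n$ does not trigger, since it needs a full $|F|^{2(d-1)+\beta}$ lines in one coset while $H_1$ holds only $|F|^{2(d-1)+\beta-1}$; and the leftover transverse lines are of no help as described --- each meets every coset in exactly one point, so bounding the size of their union is the original problem all over again, not a smaller instance of it. The assertion that concentration ``forces a large population of lines transverse to $W$'' which can then be exploited is therefore unsupported: no mechanism is given to close the intermediate-concentration case, and choosing $W$ ``more carefully'' is only named as a hope, not carried out. (A lesser issue: each pass of your secondary induction costs a constant factor, and the number of passes is bounded only by the ambient dimension, so the constants you produce would depend on $n$.)

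For comparison, the paper does not slice by parallel hyperplanes at all; the gap you left open is precisely what its machinery is built to fill. It runs an iterated form of Wolff's hairbrush argument: Lemma \ref{pslemma} pigeonholes to find the correct dimension $m \leq d$ at which the lines concentrate in $m$-planes, with the crucial byproduct (property (\ref{genwolffcondition})) that the surviving family $\L^m$ satisfies a Wolff-type axiom at every dimension strictly between $m$ and $d+1$. A popularity argument then produces an $m$-plane $R_j$ together with a large family $\L'_{R_j}$ of lines meeting $R_j$ in exactly one point (the hairbrush), and the foliation used is the pencil of $(m+1)$-planes $T_i$ containing $R_j$ from Lemma \ref{folprop}, whose leaves are pairwise disjoint off $R_j$. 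Property (\ref{genwolffcondition}) guarantees that each leaf contains few enough lines that the induction hypothesis applies at some level $d' \leq m$, and summing the resulting bounds over the disjoint leaves produces the gain. In other words, where your proposal hopes that a well-chosen parallel pencil happens to spread the lines out, the paper's pencil through $R_j$ manufactures the spreading from the incidence structure itself; without an ingredient of this kind your concentration dichotomy cannot be closed.
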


Reusing the examples above, one sees that \eqref{dimrangeff} is sharp, up to the loss in the implicit constant, and that there is nothing to be gained by taking $1 < \beta \leq 2.$

The main tool we use in the proof of \eqref{dimrangeff} is an iterated version of Wolff's hairbrush argument \cite{wolff95ibk}. For comparison, we state the finite-field version of his result\footnote{Wolff's main interest in this method was likely its use towards a partial resolution of the Kakeya conjecture (up to a negligible constant, any direction separated collection of lines satisfies the Wolff axiom). To that end, it has been superceded by Dvir's theorem \cite{dvir09osk} (see also \cite{ellenberg09ksm}), whose proof makes stronger use of the direction-separation hypotheses and does not seem to be applicable to the present question.} (see \cite{wolff99rwc},\cite{mockenhaupt04rkp}), starting with the following definition. A set of lines $\L$ in $F^n$ satisfies the \emph{Wolff axiom} if for every two-plane $R \subset F^n$ 
\[
|\{L \in \L : L \subset R\}| < |F|.
\]

\begin{theorem}[Wolff]
Suppose that $\alpha \geq 1$, $F$ is a finite field, and $\L$ is a collection of lines in $F^n$ with $|\L| \geq |F|^{\alpha}.$ If $\L$ satisfies the Wolff axiom then
\begin{equation} \label{wolffexponent}
|\bigcup_{L \in \L}L| \gtrsim |F|^{\frac{\alpha + 3}{2}}
\end{equation}
where the implicit constant is independent of $F$.
\end{theorem}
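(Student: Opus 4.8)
The plan is to adapt Wolff's hairbrush argument to the finite-field setting. Write $U = \bigcup_{L \in \L} L$ for the union whose size we wish to bound from below, and for a point $p \in U$ let $\mu(p)$ denote the number of lines of $\L$ through $p$. Since each line contains $|F|$ points, double counting incidences gives $\sum_{p \in U} \mu(p) = |\L||F|$, and Cauchy--Schwarz yields $\sum_{p} \mu(p)^2 \geq |\L|^2|F|^2/|U|$. The quantity $\sum_{p} \mu(p)(\mu(p)-1)$ counts ordered pairs of distinct lines meeting at a common point, which also equals $\sum_{L \in \L} h(L)$, where $h(L)$ is the number of lines of $\L$ meeting $L$ other than $L$ itself. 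Averaging over $L$ then produces a single line $L_0$ with
\[
h(L_0) \;\geq\; \frac{|\L||F|^2}{|U|} - |F| \;=:\; H.
\]
Before proceeding I would dispose of the trivial regime: if $|U| \geq \tfrac12|\L||F|$ then, since $|\L| \geq |F|^{\alpha}$ and $\alpha \geq 1$, we already have $|U| \gtrsim |F|^{\alpha + 1} \geq |F|^{(\alpha+3)/2}$ and are done. So we may assume $|U| < \tfrac12|\L||F|$, in which case $H \gtrsim |\L||F|^2/|U|$.

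Next comes the hairbrush itself. Each line $L \neq L_0$ meeting $L_0$ determines the unique two-plane $R = \spa(L_0, L)$ it spans together with $L_0$, so the lines counted by $h(L_0)$ organize according to the two-planes $R$ containing $L_0$. For such an $R$, let $n_R$ be the number of lines of $\L$ contained in $R$ and meeting $L_0$, so that $\sum_{R \supset L_0} n_R \geq H$. The Wolff axiom enters here as the crucial input: it forces $n_R < |F|$, and therefore the $n_R$ lines of $\L$ inside $R$ — each contributing $|F| - 1$ points off $L_0$, any two sharing at most one point — cover at least $n_R(|F|-1) - \binom{n_R}{2} \gtrsim n_R|F|$ points of $R \setminus L_0$. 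Since two distinct two-planes through $L_0$ intersect only in $L_0$, these point sets are pairwise disjoint across the various $R$, and summing gives
\[
|U| \;\gtrsim\; \sum_{R \supset L_0} n_R|F| \;\gtrsim\; H|F|.
\]

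Combining the two bounds, $|U| \gtrsim H|F| \gtrsim |\L||F|^3/|U|$, hence $|U|^2 \gtrsim |\L||F|^3$ and $|U| \gtrsim |\L|^{1/2}|F|^{3/2} \geq |F|^{\alpha/2}|F|^{3/2} = |F|^{(\alpha+3)/2}$, as desired. The one genuinely delicate point, and the place where the Wolff axiom is indispensable, is the per-plane count: without the hypothesis $n_R < |F|$ the lines in a single two-plane could concentrate so that the pairwise-intersection term $\binom{n_R}{2}$ overwhelms the main term $n_R|F|$, and the hairbrush would fail to fill out $\gtrsim n_R|F|$ new points. Everything else is bookkeeping with incidence counts; the real content is that the Wolff axiom confines the possible degeneration to a two-plane, where it is quantitatively harmless.
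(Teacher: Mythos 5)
Your proof is correct, and it is the standard hairbrush argument carried out carefully: Cauchy--Schwarz on incidences to produce a line $L_0$ met by $\gtrsim |\L||F|^2/|U|$ other lines (after disposing of the regime $|U| \geq \tfrac12 |\L||F|$, where the claim is trivial for $\alpha \geq 1$), foliation into the two-planes through $L_0$, and the classical in-plane count, with the Wolff axiom ensuring $n_R < |F|$ so that the inclusion-exclusion loss $\binom{n_R}{2}$ does not swamp the main term. Be aware, however, that the paper does not prove this theorem at all: it is quoted, with citations to Wolff and to Mockenhaupt--Tao, purely as background, and what you have written is exactly the argument of those references and the one-step version of the scheme the paper sketches at the start of Section \ref{prsection} and then iterates (Lemmas \ref{pslemma} and \ref{folprop}, Proposition \ref{linetheorem}) to prove its new results. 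In fact, for $1 \leq \alpha \leq 3$ the statement is the case $d=2$ of Proposition \ref{linetheorem}, since the $2$-plane Wolff axiom \eqref{dpwa} coincides with the standard Wolff axiom, and for larger $\alpha$ it follows from Theorem \ref{wolffrelaxed} because the standard axiom implies the $d$-plane axiom up to a constant factor. One trivial bookkeeping point in your write-up: as literally defined, $n_R$ counts $L_0$ itself, which contributes no points off $L_0$; simply let $n_R$ count the lines of $\L \setminus \{L_0\}$ contained in $R$ and meeting $L_0$, so that $\sum_{R \supset L_0} n_R = h(L_0) \geq H$ exactly, the Wolff axiom still gives $n_R < |F|$, and each counted line contributes $|F|-1$ points off $L_0$ as you claim.
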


An immediate consequence of Theorem \ref{firstlinethm} is that, for odd integers $\alpha$, Wolff's theorem holds even for collections of lines that do not satisfy the Wolff axiom. 

The proof of Theorem \ref{firstlinethm} also shows that the Wolff axiom can be relaxed for general values of $\alpha.$ We say that a set of lines $\L$ in $F^n$ satisfies the \emph{$d$-plane Wolff axiom} if for every $d$-plane $R \subset F^n$ 
\begin{equation} \label{dpwa}
|\{L \in \L : L \subset R\}| < |F|^{2d-3}.
\end{equation}
If $R$ is a $d$-plane then there are approximately $|F|^{3(d-2)}$ 2-planes $S$ contained in $R$ and for each line $L \subset R$ there are approximately $|F|^{d-2}$ 2-planes $S$ with $L \subset S \subset R.$ Thus, the $d$-plane Wolff axiom asserts that for every $d$-plane $R$ the standard Wolff axiom holds ``on average'' for two-planes $S \subset R.$ In particular, the d-plane Wolff axiom is weaker than the standard Wolff axiom when $d > 2$ (assuming one is willing to adjust the axioms by a constant factor, which would make no impact on the validity of the stated theorems). 

\begin{theorem} \label{wolffrelaxed}
Suppose that $d > 2$ is an integer, $2(d-1) - 1 < \alpha < 2(d-1) + 1$, $F$ is a finite field, and $\L$ is a collection of lines in $F^n$ with $|\L| \geq |F|^{\alpha}.$ If $\L$ satisfies the $d$-plane Wolff axiom \eqref{dpwa} then
\begin{equation} \label{wolffexponent2}
|\bigcup_{L \in \L}L| \gtrsim |F|^{\frac{\alpha + 3}{2}}
\end{equation}
where the implicit constant is independent of $F$.
\end{theorem}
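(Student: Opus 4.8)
The plan is to adapt Wolff's hairbrush argument, using the $d$-plane Wolff axiom \eqref{dpwa} in place of the two-plane axiom to control the concentration of the hairbrush. Write $N = |\bigcup_{L \in \L} L|$ and, for $p \in F^n$, let $t(p)$ be the number of lines of $\L$ through $p$, so that $\sum_p t(p) = |\L|\,|F|$. After a preliminary dyadic pigeonholing to regularize the multiplicities $t(p)$ and to pass to a subcollection in which every remaining line meets many others, I would first produce a rich \emph{stem}: since the number of incident ordered pairs of lines is $\sum_p t(p)(t(p)-1) \gtrsim (\sum_p t(p))^2/N = |\L|^2|F|^2/N$ by Cauchy--Schwarz, averaging over $\L$ yields a line $L_0$ met by $|\mathcal{H}| \gtrsim |\L|\,|F|^2/N$ other lines of $\L$. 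This step uses no axiom. The hairbrush $\mathcal{H} = \{L \in \L : L \cap L_0 \neq \emptyset,\ L \neq L_0\}$ is the object whose union I must bound below; combined with the richness bound, an estimate $|\bigcup_{L\in\mathcal{H}} L| \gtrsim |\mathcal{H}|\,|F|$ would give $N^2 \gtrsim |\L|\,|F|^3 = |F|^{\alpha+3}$, which is exactly \eqref{wolffexponent2}.

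Second, I would decompose the hairbrush according to the two-planes through $L_0$: for each two-plane $P \supset L_0$ let $n_P$ be the number of lines of $\mathcal{H}$ in $P$, so $\sum_P n_P = |\mathcal{H}|$. Since distinct two-planes through $L_0$ meet only in $L_0$, and $n_P$ distinct lines in a two-plane cover $\gtrsim \min(n_P,|F|)\,|F|$ points, one has $|\bigcup_{\mathcal{H}} L| \gtrsim |F|\sum_P \min(n_P,|F|)$. For the \emph{light} two-planes, those with $n_P \lesssim |F|$, this already contributes the full $n_P\,|F|$ and yields the desired bound. The entire difficulty is therefore concentrated in the \emph{heavy} two-planes, where $n_P$ may be as large as $|F|^2$ and the estimate $\min(n_P,|F|)|F| = |F|^2$ wastes a factor $n_P/|F|$; the naive bound then degrades to $N \gtrsim |F|^{(\alpha+2)/2}$, half a power short of the target.

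Third --- and this is where \eqref{dpwa} must enter and where I expect the genuine work to lie --- I would treat the heavy two-planes by iterating the hairbrush. The content of \eqref{dpwa} is that heavy two-planes cannot cluster: within any $d$-plane $R \supset L_0$ the lines of $\L$ number fewer than $|F|^{2d-3}$, so that (as observed after \eqref{dpwa}) the two-plane axiom holds \emph{on average} over the two-planes of $R$. I would group the heavy two-planes according to the $d$-planes through $L_0$ that they span and, inside each such $R \cong F^d$, analyze the lines through $L_0$ as a hairbrush one dimension lower, feeding the outcome back into the global count. Because Wolff's map $\alpha \mapsto (\alpha+3)/2$ is a fixed point of the resulting recursion, and the base case $d=2$ is precisely Wolff's theorem \eqref{wolffexponent}, the half-power lost in the naive heavy-plane estimate should be restored at each level rather than compounding.

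The principal obstacle is that \eqref{dpwa} does not descend cleanly: the $d$-plane axiom bounds the number of lines in a $(d-1)$-plane only by $|F|^{2d-3}$, whereas the inductive step one dimension down would want the sharper $(d-1)$-plane bound $|F|^{2d-5}$. Thus the recursion cannot simply reinvoke the statement with $d$ replaced by $d-1$; instead I would carry the \emph{averaged} form of the axiom through the iteration, tracking at each stage the total number of lines in the ambient sub-plane rather than a pointwise per-two-plane bound, and passing to individual two-planes only at the bottom level. Making this precise --- choosing the dyadic scales for $t(p)$, for $n_P$, and for the per-$d$-plane line counts so that every subcollection met in the recursion is uniform; checking that the hypothesis $2(d-1)-1 < \alpha < 2(d-1)+1$ is exactly what keeps each instance of the recursion within the range where the one-lower statement applies; and summing the $d$-plane contributions using that distinct $d$-planes through $L_0$ meet only in sets of dimension $<d$ --- is the delicate part, but each loss is a constant depending only on $d$, consistent with the statement.
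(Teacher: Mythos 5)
Your first two steps are fine (they are the standard reduction: a stem $L_0$ with $|\mathcal{H}|\gtrsim|\L||F|^2/N$, the decomposition into two-planes through $L_0$, and the observation that light planes give the full bound), but the entire theorem lives in your third step, and the mechanism you propose there cannot work. Group the heavy two-planes through $L_0$ into $d$-planes through $L_0$, as you suggest, and look inside one such $d$-plane $R$: it contains only $\approx|F|^{d-2}$ two-planes through $L_0$, each with at most $\approx|F|^2$ lines, so the hairbrush lines inside $R$ number at most $\approx|F|^{d}$. For $d\geq 4$ this is far below the threshold $|F|^{2d-3}$ of \eqref{dpwa}, so the $d$-plane axiom --- pointwise or ``averaged'' --- imposes no constraint whatsoever inside $R$ (for $d=3$ it only forbids exact saturation). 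Meanwhile the union of those lines inside $R$ can never exceed $|R|=|F|^d$. Consequently the configuration in which every heavy two-plane through $L_0$ is completely filled, with these two-planes packed into $d$-planes through $L_0$, is fully consistent with \eqref{dpwa} and has union-to-line ratio $\approx 1$ on the heavy part; no recursion carried out inside the $d$-planes through $L_0$, whatever axiom it carries and whatever its base case, can manufacture more than $|F|^d$ points there. So the single-stem hairbrush framework genuinely tops out at $N\gtrsim|F|^{(\alpha+2)/2}$: the obstacle you flag in your last paragraph is not a delicate technicality to be handled by careful pigeonholing, it is fatal to this route. (The theorem is still true for such configurations only because the \emph{rest} of $\L$, which has $|F|^\alpha\gg|\mathcal{H}|$ lines, cannot coexist with so few points --- but one hairbrush around one line cannot see that.)

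The paper's proof is structured precisely to avoid this trap. Theorem \ref{wolffrelaxed} is deduced from the stronger Proposition \ref{linetheorem} (the part \eqref{dpwaresult}), proved by induction on $d$ with the elementary Cauchy--Schwarz bound (Lemma \ref{classmethod}) at the base --- Wolff's theorem is never invoked. The only role of \eqref{dpwa} is to force the structure lemma (Lemma \ref{pslemma}), applied globally to $\L$ rather than to planes through a stem, to output a concentration dimension $m<d$: one obtains $m$-planes each containing $\geq|F|^{2m-3}$ lines, together with the crucial non-concentration property (e) that no plane of dimension $m'\in(m,d]$ contains $\geq|F|^{2m'-3}$ lines of the retained family. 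The hairbrush is then built around one rich $m$-plane $R_j$, not around a line: a popularity argument produces $\gtrsim|F|^m$ points of $R_j$ each meeting $\gtrsim|F|^{d-1-\frac{1-\beta}{2}}$ lines that leave $R_j$, hence $\gtrsim|F|^{m+d-1-\frac{1-\beta}{2}}$ such lines; foliating $F^n$ by $(m+1)$-planes $T_i\supset R_j$ (Lemma \ref{folprop}) and estimating each leaf by the inductive, axiom-free bound \eqref{nondpwaresult} --- legitimate exactly because property (e) caps $|\L_i|<|F|^{2(m-1)+1}$, which is how the paper sidesteps the non-descent of the axiom you correctly identified --- gives $|\P_i|\gtrsim|\L_i|/|F|^{m-2}$, and summing over $i$ yields $|F|^{d+\frac{\beta+1}{2}}=|F|^{\frac{\alpha+3}{2}}$. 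The missing ideas in your proposal are thus the global concentration-dimension lemma and the decision to hang the brush on an $m$-plane, with the Wolff-type axiom used only once, to guarantee $m<d$.
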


Bounds of the form \eqref{wolffexponent} do not seem to be sharp; at least, they can be slightly strengthened in the case when $F = \BBZ_p$, $\alpha = 2$, and $n=3$, see \cite{bourgain04spe}. 

Since \eqref{wolffexponent2} improves on \eqref{dimrangeff} when $\beta < 1$, one can use the $d$-plane Wolff axiom to extract structural information about quasi-extremizers (cf. \cite{christ06qer}) of \eqref{dimrangeff}.

\begin{theorem} \label{qethm}
Suppose $d \geq 1$ is an integer, and that $0 \leq \beta < 1$. Then, for every $C$ there exist $M$ and $c > 0$ such that if $F$ is a finite field with $|F| \geq M$ and $\L$ is a collection of lines in $F^n$ with $|L| \geq |F|^{2(d-1) + \beta}$ satisfying
\begin{equation} \label{qebound}
|\P| \leq C  |F|^{d + \beta}
\end{equation}
where $\P := \bigcup_{L \in \L}L,$
then there are $d$-planes $R_1, \ldots, R_N$ with $N \geq c |F|^\beta$ such that 
\[
|\P \cap \bigcup_{j} R_j| \geq c |\P|
\]
and for each $j$
\begin{equation} \label{denseindplane}
|\P \cap R_j| \geq c |R_j|.
\end{equation}
\end{theorem}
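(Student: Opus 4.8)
The plan is to argue by contradiction against the improved bound of Theorem \ref{wolffrelaxed}. Since $\beta < 1$, at $\alpha = 2(d-1) + \beta$ the exponent $\frac{\alpha+3}{2} = d + \frac{1+\beta}{2}$ strictly exceeds $d + \beta$, so a quasi-extremizer obeying \eqref{qebound} cannot satisfy the $d$-plane Wolff axiom once $|F| \geq M$ is large (depending on $C$); in fact it must fail \eqref{dpwa} repeatedly. I would run a greedy extraction: set $\L^{(0)} = \L$, and given $\L^{(i)}$ with $|\L^{(i)}| \geq |F|^{2(d-1)-1 + 2\beta + \delta}$ for a fixed $\delta \in (0, 1-\beta)$, apply Theorem \ref{wolffrelaxed} to $\L^{(i)}$ (for $d > 2$; for $d = 2$ the axiom \eqref{dpwa} is literally the Wolff axiom and one uses Wolff's theorem instead). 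Since $\bigcup_{L \in \L^{(i)}} L \subseteq \P$ and the resulting exponent would force $|\P| \gg C|F|^{d+\beta}$, the axiom must fail: there is a $d$-plane $R_{i+1}$ containing at least $|F|^{2d-3}$ lines of $\L^{(i)}$. Removing these lines, each step deletes between $|F|^{2d-3}$ and $\sim |F|^{2(d-1)}$ lines, and the iteration persists until $|\L^{(i)}|$ drops below $|F|^{2(d-1)-1+2\beta+\delta}$; since the gap $1-\beta$ between the starting exponent $2(d-1)+\beta$ and this threshold is positive, the total number of deleted lines is $\geq \frac{1}{2}|F|^{2(d-1)+\beta}$, and dividing by the maximal per-step deletion $\sim |F|^{2(d-1)}$ yields $N \geq c|F|^\beta$.

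The density conclusion \eqref{denseindplane} is then immediate from Theorem \ref{firstlinethm} applied \emph{inside} the plane: the at least $|F|^{2d-3} = |F|^{2((d-1)-1)+1}$ lines lying in $R_{i+1} \cong F^{d}$ form a collection of ``dimension'' $2((d-1)-1)+1$, so the main theorem (with $d-1$ in place of $d$ and $\beta = 1$) gives $|\P \cap R_{i+1}| \gtrsim |F|^{(d-1)+1} = |F|^d = |R_{i+1}|$.

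The main obstacle is the covering estimate $|\P \cap \bigcup_j R_j| \geq c|\P|$: the bare line-removal iteration only guarantees that uncovered points of $\P$ lie on the surviving lines $\L^{(N)}$, and the trivial bound $|F|\,|\L^{(N)}|$ on their union exceeds $|\P| \sim |F|^{d+\beta}$ once $d \geq 2$, so coverage does not come for free. To repair this I would instead run the extraction relative to the uncovered set: at stage $i$, with $\calX_i := \P \cap \bigcup_{j \leq i} R_j$ and $U_i := \P \setminus \calX_i$, work only with the subcollection $\L^\flat_i$ of lines that are almost entirely uncovered (say $|L \cap \calX_i| \leq \frac{1}{100}|F|$), and stop as soon as $|\calX_i| \geq \frac{1}{2}|\P|$. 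The crux is the lower bound $|\L^\flat_i| \gtrsim |F|^{2(d-1)+\beta}$ whenever $|U_i| \geq \frac{1}{2}|\P|$, and this is the genuine difficulty, since a priori a point of $\P$ may lie on many lines and the covered incidences $\sum_L |L \cap \calX_i|$ are uncontrolled. I expect to handle this by first discarding the points of $\P$ of atypically high multiplicity: the incidence-weighted average multiplicity is $\frac{|\L|\,|F|}{|\P|} \sim |F|^{d-1}$, so high-multiplicity points are few and carry a controllable share of the incidence mass $|\L|\,|F|$, after which $\sum_L |L \cap \calX_i| \lesssim |\calX_i|\,|F|^{d-1} \leq \frac{1}{2}|\P|\,|F|^{d-1}$ leaves a positive proportion of the incidences on uncovered points and forces $\L^\flat_i$ to be large. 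Granting this, Theorem \ref{wolffrelaxed} applied to $\L^\flat_i$ again produces a $d$-plane $R_{i+1}$ holding $\geq |F|^{2d-3}$ almost-uncovered lines; the remaining subtlety — ensuring that the full-line union $\gtrsim |F|^d$ furnished by Theorem \ref{firstlinethm} inside $R_{i+1}$ is not itself swallowed by $\calX_i$ — would be settled by choosing $R_{i+1}$ among the popular planes that are not already saturated by $\calX_i$, using the global budget $|\calX_i| \leq \frac{1}{2}|\P|$. Each step then adds $\gtrsim |F|^d$ genuinely new points until $|\calX_i| \geq \frac{1}{2}|\P|$, which simultaneously certifies coverage and, by the same budget count, gives $N \geq c|F|^\beta$. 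Finally, the cases $d = 1$ (where the $R_j$ may be taken to be lines of $\L$ and everything is elementary) and $d = 2$ (where \eqref{dpwa} is the ordinary Wolff axiom) would be recorded separately.
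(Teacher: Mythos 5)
Your opening move (playing the quasi-extremal hypothesis \eqref{qebound} against the improved exponent $d+\frac{1+\beta}{2}$ to force the existence of heavy $d$-planes) and your density step (applying the main theorem with $d'=d-1$, $\beta'=1$ inside each plane) both match the paper. But the coverage estimate, which you correctly identify as the crux, has a genuine gap, and your proposed repair does not close. Quantitatively: after pruning points of multiplicity $\geq K|F|^{d-1}$, your bound on the covered incidence mass is $\sum_L |L\cap\calX_i| \lesssim |\calX_i|\, K|F|^{d-1} \leq \frac{CK}{2}|F|^{2d-1+\beta}$, while the \emph{total} incidence mass $|\L|\,|F|$ can be as small as $|F|^{2d-1+\beta}$. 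Since $C$ is an arbitrary given constant and $K\geq 1$, the covered incidences are not a small fraction of the total --- your upper bound exceeds the total by a factor $\sim CK$ --- so nothing forces a positive proportion of incidences onto uncovered points, and the key claim $|\L^\flat_i|\gtrsim |F|^{2(d-1)+\beta}$ is unproved. This is not a removable technicality: in the quasi-extremal regime the typical multiplicity is genuinely $\sim |F|^{d-1}$ (that is exactly what makes $|\P|$ small), so lowering the pruning threshold destroys all of $\P$, while keeping it leaves the useless bound above. Moreover, for an \emph{arbitrary} half-density subset $\calX\subset\P$ the claim you want is simply false (a ``slab'' of density $1/2$ in each plane of the extremal example meets essentially every line in $\sim |F|/2$ points); any correct argument must use the fact that $\calX_i$ is a union of $d$-planes, which your incidence bookkeeping never does.

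The paper's proof avoids all of this with one observation you are missing. Take $R_1,\ldots,R_N$ to be \emph{all} $d$-planes $R$ with $|\L_R|\geq |F|^{2d-3}$ (no greedy removal or iteration is needed), and split $\L=\L'\cup\L''$ with $\L'=\bigcup_j \L_{R_j}$. By construction $\L''$ satisfies the $d$-plane Wolff axiom \eqref{dpwa}, so if $|\L''|\geq\frac12|\L|$ then \eqref{dpwaresult} of Proposition \ref{linetheorem} gives $|\P|\gtrsim |F|^{d+\frac{1+\beta}{2}}>C|F|^{d+\beta}$ once $|F|\geq M$, a contradiction; hence $|\L'|\geq\frac12|\L|$. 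Now apply the basic bound \eqref{nondpwaresult} to $\L'$ \emph{itself}: its union $\P'$ satisfies $|\P'|\gtrsim |F|^{d+\beta}\geq \frac{1}{C}|\P|$ by \eqref{qebound}, and --- this is the point --- $\P'\subseteq \P\cap\bigcup_j R_j$ automatically, because every line of $\L'$ lies in some $R_j$. Coverage is thus immediate: the covered set is large not because few incidences land on it, but because the lines inside the heavy planes are numerous enough to generate, on their own, a union of size comparable to $|\P|$. The count $N\geq c|F|^\beta$ then follows from $|\L'|\geq\frac12|\L|$ together with $|\L_{R_j}|\lesssim |F|^{2(d-1)}$ (by \eqref{plcount}), and \eqref{denseindplane} follows as you describe. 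In short, your plane-extraction and density steps are sound, but the mechanism you propose for coverage fails by exactly the factor $C$ that quasi-extremality puts in play; the fix is to run the union-of-lines theorem on the half of $\L$ living inside the extracted planes rather than to control incidences with the covered set.
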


One can also prove a version of the statement above for $-1 < \beta < 0,$ but we omit the details. 

By adding two additional layers of recursion, the method of Theorem \ref{firstlinethm} can be adapted to treat unions of $k$-planes. 

\begin{theorem} \label{upthm1}
Suppose $d \geq k > 0$ are integers, that $0 \leq \beta \leq 1$ and that $\L$ is a collection of $k$-planes in $F^n$ with 
\begin{equation} \label{upthm1hyp}
|\L| \geq  |F|^{(k+1)(d-k)+\beta}.
\end{equation}
Then 
\begin{equation} \label{unionplanesbound}
|\bigcup_{L \in \L}L| \gtrsim |F|^{d + \beta}.
\end{equation}
\end{theorem}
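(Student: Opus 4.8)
The plan is to reduce the $k$-plane statement to the line statement (Theorem~\ref{firstlinethm}) by induction on $k$, peeling off one dimension at a time. The key observation is that a $k$-plane can be foliated by lines, and conversely a collection of $k$-planes with large cardinality ought to contain, after suitable pigeonholing, a large collection of $(k-1)$-planes or lines whose union is comparable to the original union. Let me set up the counting: the exponent in \eqref{upthm1hyp} is $(k+1)(d-k)+\beta$, and for $k=1$ this reads $2(d-1)+\beta$, matching the line hypothesis exactly. So the base case $k=1$ is precisely Theorem~\ref{firstlinethm}, and the conclusion \eqref{unionplanesbound} is identical in form. The inductive step must show that the exponent $(k+1)(d-k)$ decreases to $k(d-k+1) = k(d-(k-1))$ as one passes from $k$-planes to $(k-1)$-planes in the appropriate ambient configuration, so that the recursion closes.

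First I would make precise the reduction. Given $\L$ a collection of $k$-planes, consider the collection $\L'$ of all $(k-1)$-planes contained in some $L \in \L$. Each $k$-plane over $F$ contains roughly $|F|^{k}$ distinct $(k-1)$-subplanes (parametrized by a point in the dual projective space of the quotient), so generically $|\L'| \gtrsim |\L| \cdot |F|^{k}$, unless there is heavy multiplicity. The union $\bigcup_{L' \in \L'} L'$ is of course the same set as $\bigcup_{L \in \L} L$. To apply the inductive hypothesis at level $k-1$, I need $|\L'| \geq |F|^{k(d-k+1)+\beta}$. Since $(k+1)(d-k) + k = k(d-k+1) + (d-k) \geq k(d-k+1)$ when $d \geq k$, the naive count gives room to spare; the danger is that the $(k-1)$-planes are counted with multiplicity, i.e. many $k$-planes in $\L$ share a common $(k-1)$-plane. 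This is exactly the phenomenon the ``two additional layers of recursion'' mentioned before the theorem are designed to control, and it is where the hairbrush-type argument must re-enter rather than a purely combinatorial descent.

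Accordingly, the main step I would carry out is a pigeonholing/dichotomy. Either the $(k-1)$-planes in $\L'$ are essentially distinct, in which case the cardinality bound survives and induction applies directly; or a typical $(k-1)$-plane $W$ is contained in many members of $\L$, say $\mu$ of them. In the latter case the $k$-planes through $W$ sweep out a configuration foliated by $k$-planes all sharing the common $(k-1)$-plane $W$, and their union is controlled by studying the quotient $F^n / W$, where these $k$-planes project to a collection of \emph{lines} through the image of $W$. This is the analogue of Wolff's hairbrush: $W$ plays the role of the ``stem'' and the $k$-planes containing it form the ``bristles.'' Applying Theorem~\ref{firstlinethm} (or the relaxed Theorem~\ref{wolffrelaxed}) in the quotient space $F^n/W \cong F^{n-k+1}$ to this induced collection of lines, and then fibering back over $W$ by multiplying by $|W| = |F|^{k-1}$, should reproduce the target bound \eqref{unionplanesbound} with the correct exponent $d+\beta$. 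Balancing the two alternatives is the crux: one verifies that the threshold exponent $(k+1)(d-k)+\beta$ is precisely the value at which the distinct-planes branch and the common-subplane (hairbrush) branch both deliver $|F|^{d+\beta}$, which is why the hypothesis takes that form.

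The hard part will be executing the hairbrush branch uniformly in the multiplicity parameter $\mu$ and ensuring the two additional recursion layers close without degradation of the implicit constant. In particular, one must handle the quantitative bookkeeping that guarantees that after passing to the quotient, the induced line collection still meets the cardinality hypothesis $|F|^{2(d'-1)+\beta}$ for the appropriate effective dimension $d' = d-k+1$, and that the fibers over $W$ genuinely contribute a full factor of $|W|$ to the union rather than overlapping. I expect the bulk of the technical work to lie in this fibering-and-quotient analysis and in the careful pigeonholing needed to reduce to a single dominant multiplicity scale $\mu$; the combinatorial descent on $k$ itself is conceptually straightforward once the hairbrush estimate in the quotient is in hand.
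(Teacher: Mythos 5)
Your outer induction on $k$ (with Theorem \ref{firstlinethm} as the base case) matches the skeleton of the paper's argument, which derives the result from a statement about $(k-1)$-planes inside $k$-planes; but the core step of your proposal — the multiplicity dichotomy — does not close, and the loss is quantitative, not bookkeeping. Write $\mu = |F|^t$ for the (pigeonholed) multiplicity with which a typical $(k-1)$-plane occurs among members of $\L$. In your ``essentially distinct'' branch, the collection $\L'$ of distinct $(k-1)$-planes has size about $|\L||F|^k/\mu = |F|^{k(d-k+1)+(d-k)+\beta-t}$, so the level-$(k-1)$ inductive hypothesis yields a union of size about $|F|^{d+(d-k)+\beta-t}$, which meets the target $|F|^{d+\beta}$ only when $t \leq d-k$. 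In your hairbrush branch, the $k$-planes containing a fixed $(k-1)$-plane $W$ intersect pairwise exactly in $W$, so their union has size $\approx \mu|F|^k$ on the nose; moreover, in the quotient $F^n/W$ these become lines through a single common point, for which Theorem \ref{firstlinethm} is strictly weaker than this trivial count, so the quotient step buys nothing. Thus this branch delivers $|F|^{k+t}$, which meets the target only when $t \geq (d-k)+\beta$. For $t$ in the gap $(d-k,\,(d-k)+\beta)$ neither branch suffices, and at the crossover $t=(d-k)+\beta/2$ the best available bound is $|F|^{d+\beta/2}$. So your balancing claim — that the threshold $(k+1)(d-k)+\beta$ makes both branches deliver $|F|^{d+\beta}$ — is false; the dichotomy loses a factor $|F|^{\beta/2}$, exactly the kind of loss a single, non-iterated hairbrush produces. (There is also a secondary issue: pigeonholing over multiplicity scales up to $|F|^{n-k}$ introduces constants depending on $n$, which the theorem does not permit.)

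The statement you are implicitly assuming away is precisely the paper's Proposition \ref{maintheoremcd1}: a collection of $\gtrsim |F|^{(k+1)(d-k)+\beta}$ $k$-planes contains $\gtrsim |F|^{k(d-k+1)+\beta}$ \emph{distinct} $(k-1)$-planes, i.e.\ multiplicity can be defeated with no loss in the exponent. The paper never dichotomizes on multiplicity; it proves this proposition by induction on $d$ using a hairbrush whose stem is an $m$-plane $R_j$ produced by Lemma \ref{pslemma} (chosen so that a Wolff-type density bound holds at every scale above $m$), estimates the bristles not by trivial counting but by recursion inside the leaves of the foliation from Lemma \ref{folprop}, and iterates a popularity refinement through the levels $q=k,k-1,\ldots,0$, gaining a factor $C|F|^{d-k}$ at each level — these are the ``two additional layers of recursion'' referred to before the theorem, and they are what eliminate the $\beta/2$ loss. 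Theorem \ref{upthm1} is then the case $k'=0$ of Theorem \ref{upthm2}, which follows from the proposition by composing one codimension at a time, which is the part of your outline that is correct but also the easy part.
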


Our proof requires simultaneous treatment of the following more general result.

\begin{theorem} \label{upthm2}
Suppose $d \geq k > k' \geq 0$ are integers, that $0 \leq \beta \leq k' + 1$ and that $\L$ is a collection of $k$-planes in $F^n$ with 
\[
|\L| \geq  |F|^{(k+1)(d-k)+\beta}.
\]
Letting $\P_L = \{k'\text{-planes\ }P : P \subset L\}$ we have
\begin{equation} 
|\bigcup_{L \in \L}\P_L| \gtrsim |F|^{(k' + 1)(d - k') + \beta}.
\end{equation}
\end{theorem}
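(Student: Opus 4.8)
The plan is to prove Theorems \ref{upthm1} and \ref{upthm2} together by a single induction, since the recursion that counts $k'$-planes in a union of $k$-planes will inevitably spawn subproblems with shifted values of $k'$ (and of $k$); this is exactly why \ref{upthm1}, the case $k'=0$, cannot be isolated. I would organize the induction on $d$, taking $d=k$ as the base case. There the hypothesis furnishes $\gtrsim |F|^{\beta}$ distinct $k$-planes with $\beta \le k'+1$, and since two distinct $k$-planes meet in a plane of dimension at most $k-1$, the families $\P_L$ share at most $\approx |F|^{(k'+1)(k-1-k')}$ common $k'$-planes. A second-moment (Cauchy--Schwarz) count then gives the bound: writing $m=(k'+1)(k-k')$, the numerator is $\approx |F|^{2m+2\beta}$, the diagonal of the denominator is $\approx |F|^{m+\beta}$, and the off-diagonal is $\approx |F|^{2\beta+m-(k'+1)}$, so the diagonal dominates precisely when $\beta \le k'+1$ and one recovers $\gtrsim |F|^{m+\beta}$. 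Thus the side constraint $\beta \le k'+1$ is forced by, and exactly calibrated to, this base estimate.

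For the inductive step I would run a hairbrush argument adapted to $k$-planes. After a dyadic pigeonholing that fixes, up to constants, the multiplicity with which the $k'$-planes in $\bigcup_{L\in\L}\P_L$ are covered, I would single out a popular central plane (a member of $\L$, or one of its subplanes) and study the subcollection of $k$-planes meeting it. The geometric heart is a transversality statement replacing the elementary fact used in Wolff's original argument, that two lines meet in at most a point: two $k$-planes can intersect in subplanes of many different dimensions, so I would stratify the hairbrush according to the dimension of intersection with the central plane, and show that off the central plane the transverse strata contribute nearly disjoint families of $k'$-planes. Summing these contributions produces a lower bound on $|\bigcup_{L\in\L}\P_L|$ that beats the target whenever the collection is suitably spread out.

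To make ``spread out'' precise I would impose a higher-dimensional analogue of the $d$-plane Wolff axiom \eqref{dpwa} and argue by dichotomy. If the axiom holds, the hairbrush bound closes the induction directly, and indeed with room to spare, paralleling the way \eqref{wolffexponent2} improves on \eqref{dimrangeff} for $\beta<1$. If it fails, there is a proper subplane $R$ capturing an abnormally large share of $\L$; restricting to $R$ and quotienting reduces the problem to an instance of the joint induction hypothesis with strictly smaller $d$. It is here that the value of $k'$ must be allowed to change, because tracking the $k'$-subplanes through the concentration plane naturally produces subproblems counting planes of one higher dimension, so that \ref{upthm1} must be run together with the full range $0 \le k' < k$ of \ref{upthm2}. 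A companion recursion reducing $k$-planes to $(k-1)$-planes supplies the second of the two extra layers of recursion beyond the single ($d$-reducing) layer of Theorem \ref{firstlinethm}.

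I expect the main obstacle to be the concentration (axiom-failure) branch: one must check that the exponents bookkeep exactly, i.e. that restricting to $R$ and renormalizing lands precisely on the hypothesis exponent $(k+1)(d-k)+\beta$ of a strictly lower instance, with the relevant $\beta$'s and the side constraint $\beta\le k'+1$ preserved along every branch of the coupled $(d,k,k')$ recursion, and that the whole scheme is well-founded despite $k'$ sometimes increasing. The transversality and near-disjointness estimate for the $k'$-subplane families of the hairbrush is the second delicate point, since it must hold uniformly across the intersection-dimension strata without losing powers of $|F|$. Reconciling these two requirements is what makes the simultaneous treatment of the two theorems unavoidable.
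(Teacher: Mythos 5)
Your overall architecture --- joint treatment of all $k'$, induction on $d$ with a Cauchy--Schwarz base case at $d=k$ (your computation there is correct, and it does pinpoint why $\beta\le k'+1$ is the natural constraint), a hairbrush around a popular plane, and a dichotomy between a Wolff-type spreadness condition and concentration in a lower-dimensional plane --- is essentially the paper's. But you couple the parameters incorrectly at one structural point, and the step you defer is exactly the one the paper is organized to avoid. The concentration branch does \emph{not} shift $k'$: if $\L$ piles into proper subplanes, the subproblem is to count $k'$-planes inside the union of the members of $\L$ lying in such a plane --- same $k$, same $k'$, strictly smaller $d$ (this is Case 1 of the proof of Proposition \ref{maintheoremcd1}, recombined across the planes $R_j$ by Lemma \ref{classmethod}). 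The shifts in $k'$ occur elsewhere entirely: in an \emph{outer} induction on $k-k'$, where counting $k'$-planes in $k$-planes is factored as (i) count $(k'+1)$-planes in the $k$-planes, then (ii) count $k'$-planes in that family of $(k'+1)$-planes; step (ii) is a codimension-one statement (Proposition \ref{maintheoremcd1}). Since each factorization lowers $k-k'$ by one, the well-foundedness problem you flag but cannot resolve --- $k'$ ``sometimes increasing'' inside your $d$-induction --- never arises: the $d$-induction lives entirely inside the codimension-one proposition, with the codimension frozen at one.

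This separation also disposes of your second obstacle, and here is the genuine gap: you propose to run the hairbrush directly in codimension $k-k'$, stratified by intersection dimension, and you defer the ``uniform transversality across strata'' estimate. No such estimate is proved, or needed, in the paper, and in the generality you require it is the hard part of your plan: if $P$ is a $k'$-plane in $L$ with $k-k'$ large, $\dim(P\cap R_j)$ ranges over many values and no single stratum dominates cleanly. The paper runs the hairbrush only in codimension one, where the geometry is clean: if $L\cap R_j$ is a $q$-plane, then of the $\approx|F|^k$ hyperplanes $P\subset L$, at most $\lesssim|F|^{k-q}$ contain $L\cap R_j$ and (by Lemma \ref{eiprop}) at most $\lesssim|F|^{k-q+1}$ miss it, so almost all meet $R_j$ in dimension exactly $q-1$; Lemma \ref{folprop} then foliates the ambient space into leaves $T_i\supset R_j$ whose contributions are genuinely disjoint. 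Moreover the hairbrush branch is not a one-shot ``sum of nearly disjoint contributions'': inside each leaf one must invoke the induction hypothesis at a smaller $d'$ (legitimate only because of property (\ref{genwolffcondition}) of Lemma \ref{pslemma}), and this is iterated over $q=k,k-1,\dots,0$ via the sets $\P^{\sharp,q},\L^{\sharp,q}$, with the gain at each stage fed into the next. Without first reducing to codimension one, your summation scheme has no mechanism to recover the target exponent.
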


Theorems \ref{upthm1} and \ref{upthm2} are sharp in the same sense as Theorem \ref{firstlinethm}. For a $k$-plane analog of Wolff's theorem see \cite{bueti06ibk}. It may be possible to modify the proof of \eqref{unionplanesbound} to obtain a $k$-plane analog of Theorem \ref{wolffrelaxed}, but we do not pursue the details here.

The outline of this article is as follows: Section \ref{prsection} contains some technical machinery, Section \ref{linesection} contains the proofs of Theorems \ref{firstlinethm}, \ref{wolffrelaxed}, and \ref{qethm},  and Section \ref{planesection} contains the proofs of Theorems \ref{upthm1} and \ref{upthm2}.

\section{Preliminaries} \label{prsection}

We start by roughly describing the approach of \cite{wolff95ibk}. If a union of lines is small, then there must exist a ``hairbrush'' of many lines intersecting one common line. The ambient space can then be foliated into two-dimensional planes containing the common line, and a classical bound can be applied to estimate the union of lines contained in each two-plane.  

In the present situation, we instead consider a hairbrush of many lines or $k$-planes intersecting a common $m$-dimensional plane. The following lemma is used to determine the appropriate choice of $m$.

\begin{lemma} \label{pslemma}
Let $\L$ be a collection of $k$-planes in $F^n$ and suppose $d$ is a nonnegative integer with $k \leq d \leq n$. There is an $m$ with $k \leq m \leq d$, a collection of $m$-planes $R_1, \ldots, R_N$, and collections of $k$-planes $\L_{R_1}, \ldots, \L_{R_N}$ such that
\begin{enumerate}[(a)]
\item the $\L_{R_j}$ are pairwise disjoint subsets of $\L$ with $L \subset R_j$ for $L \in \L_{R_j}$;
\item if $m > k$ then $|\L_{R_j}| \geq  |F|^{(k+1)(m - 1 - k) + k}$;
\item if $m=k$ then $|\L_{R_j}| = 1$;
\item \label{epcondition} letting $\L^m = \bigcup_{j}\L_{R_j}$, we have $|\L^m| \geq 2^{-(d-m+1)}|\L|$;
\item \label{genwolffcondition} if $m < m' \leq d$ then for every $m'$-plane $S$, $|\{L \in \L^m : L \subset S\}| < |F|^{(k+1)(m' - 1 - k) + k} .$
\end{enumerate}
\end{lemma}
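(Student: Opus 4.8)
The plan is to run a single downward stopping-time argument over the scales $m' = d, d-1, \ldots, k+1$, extracting concentration greedily at each scale. For $k < m' \le d$ write $t(m') := |F|^{(k+1)(m'-1-k)+k}$, and call an $m'$-plane $S$ \emph{heavy} with respect to a collection $\calA$ of $k$-planes if $|\{L \in \calA : L \subset S\}| \geq t(m')$. I would maintain a nested decreasing family of subcollections $\calA_d \supseteq \calA_{d-1} \supseteq \cdots$, starting from $\calA_d := \L$.

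At scale $m'$ I would \emph{greedily} extract heavy planes: repeatedly choose an $m'$-plane $R$ that is heavy with respect to the current remainder, set $\L_R := \{L \in \text{remainder} : L \subset R\}$, and delete $\L_R$ from the remainder. Since each extraction removes at least $t(m') \geq 1$ planes this terminates, producing pairwise disjoint families $\L_{R_j}$ each of size $\geq t(m')$ and leaving a remainder $\calA_{m'-1}$ in which \emph{no} $m'$-plane is heavy. Writing $E_{m'}$ for the extracted part, I then apply a dichotomy: if $|E_{m'}| \geq \tfrac{1}{2} |\calA_{m'}|$ I stop and set $m := m'$, taking the $R_j$ and $\L_{R_j}$ just produced; otherwise $|\calA_{m'-1}| > \tfrac{1}{2} |\calA_{m'}|$ and I pass to the next smaller scale. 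If the process reaches $m' = k$ without stopping, I set $m := k$ and let each surviving $k$-plane be its own $R_j$ with $\L_{R_j} = \{L\}$, which gives (c).

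Conditions (a), (b), (c) are then immediate from the construction (disjointness and $L \subset R_j$ from greedy deletion; the size bound $t(m) = |F|^{(k+1)(m-1-k)+k}$ from the heaviness threshold). For (d) I would track the halving: each of the $d-m$ ``continue'' steps loses at most half of the current collection, so $|\calA_m| \geq 2^{-(d-m)}|\L|$, and the stopping inequality $|\L^m| = |E_m| \geq \tfrac{1}{2}|\calA_m|$ then yields $|\L^m| \geq 2^{-(d-m+1)}|\L|$.

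The crux is condition (e), and the nested construction is designed precisely to deliver it. When the process stops at scale $m$, every larger scale $m''$ with $m < m'' \le d$ was handled in the ``continue'' branch, so its remainder $\calA_{m''-1}$ contains no $m''$-heavy plane. Since the subcollections are nested and $\L^m = E_m \subseteq \calA_m \subseteq \calA_{m''-1}$, every $m''$-plane $S$ satisfies $|\{L \in \L^m : L \subset S\}| \leq |\{L \in \calA_{m''-1} : L \subset S\}| < t(m'')$, which is exactly (e). The main obstacle to watch is the interaction between disjointification and the lower bound (b): extracting heavy planes tested against the \emph{full} collection $\calA_{m'}$ could leave individual $\L_{R_j}$ below the threshold after disjointifying, so it is essential that heaviness be measured against the shrinking remainder during the greedy loop. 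This is what simultaneously guarantees the per-plane bound (b) and the heavy-free remainder needed for (e).
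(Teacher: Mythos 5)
Your proposal is correct and is essentially the paper's own proof: the same downward greedy stopping-time over scales $m'=d,\ldots,k+1$ with heaviness measured against the shrinking remainder, the same halving dichotomy yielding (d), the same singleton enumeration at $m=k$, and the same nestedness argument for (e). No gaps to report.
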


\begin{proof}
Set $\L^{*,d+1} := \L.$ For $k \leq m \leq d$, suppose that $\L^{*,m+1} \subset \L$ has been chosen so that $|\L^{*,m+1}| \geq 2^{-(d-m)} |\L|$. Starting at $j = 1$, suppose $m$-planes $R^m_{j'}$ and collections of $k$-planes $\L_{R^m_{j'}}$ have been selected for all integers $0 < j' < j$.

If there is an $m$-plane $R$ so that 
\[
|\{L \in \L^{*,m+1} \setminus \bigcup_{j'  < j} \L_{R^m_{j'}} : L \subset R\}| \geq |F|^{(k+1)(m-1-k) + k}
\]
then let $R^m_j$ be such an $m$-plane, let
\[
\L_{R^m_j} = \{L \in \L^{*,m+1} \setminus \bigcup_{j'  < j} \L_{R^m_{j'}} : L \subset R^m_j\}
\]
and continue the process with $j+1.$ If there is no such plane then terminate the process and set 
\[
\L^{*,m} = \L^{*,m+1} \setminus \bigcup_{j'  < j} \L_{R^m_{j'}}.
\]
If $|\L^{*,m}| < |\L^{*,m+1}|/2$ then property (\ref{epcondition}) is satisfied and we terminate the process. Otherwise, continue with $m-1.$

If the process reaches the stage $m=k,$ then let $R^{k}_1, \ldots, R^k_N$ be some enumeration of $\L^{*,k+1}$ and $\L_{R^{k}_j} = \{R^k_j\}$ and we are finished.
\end{proof}

Next, we describe in detail the foliation of the ambient space.

\begin{lemma} \label{folprop}
Suppose that $S$ is an $m$-plane in $F^n$, that $0 \leq q \leq k-1,$ and that $m + k-q \leq n.$ Then we can find $(m+k-q)$-planes $T_1, \ldots, T_N$ with $S \subset T_i$ for all $i$ such that for all $(k-1)$-planes $P$ and $k$-planes $L$ satisfying
\begin{enumerate}[(a)]
\item $P \subset L$
\item $L \cap S$ is a $q$-plane
\item $P \cap S$ is a $(q-1)$-plane if $q > 0$ and $P \cap S = \emptyset$ if $q = 0$
\end{enumerate}
we have $L \subset T_i$ for some $i$ and $P \not\subset T_{i'}$ for $i' \neq i.$
\end{lemma}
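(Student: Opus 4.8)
The plan is to pass to the quotient by $S$ and take the $T_i$ to be the preimages of \emph{all} linear subspaces of the appropriate dimension. Since hypotheses (b), (c) and the desired conclusion are all invariant under translation, I would first translate so that $S$ contains the origin, making $S$ a linear subspace of dimension $m$. Let $\pi \colon F^n \to V := F^n/S$ be the quotient map onto the $(n-m)$-dimensional space $V$. The key observation is that the $(m+k-q)$-planes containing $S$ are precisely the sets $\pi^{-1}(W)$ for $(k-q)$-dimensional linear subspaces $W \subseteq V$, and that for any plane $X$ one has $X \subseteq \pi^{-1}(W)$ if and only if $\pi(X) \subseteq W$ (because $S = \ker\pi \subseteq \pi^{-1}(W)$). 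The hypothesis $m+k-q \leq n$ guarantees $k-q \leq \dim V$, so such $W$ exist. I would simply let $T_1,\dots,T_N$ enumerate $\pi^{-1}(W)$ over all $(k-q)$-dimensional $W \subseteq V$; there is no cardinality constraint to respect, so taking all of them is harmless.

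Next I would compute images under $\pi$. Writing $\vec{L}$ for the linear subspace parallel to $L$, hypothesis (b) says $L \cap S$ is a nonempty $q$-plane, so $0 \in \pi(L)$ and $\dim\pi(L) = \dim\vec{L} - \dim(\vec{L}\cap S) = k-q$; thus $\pi(L)$ is itself one of the subspaces $W_i$, and $L \subset T_i$ for the corresponding (unique) index $i$. Since $P \subset L \subset T_i$ automatically, it remains only to prove the separation statement $P \not\subset T_{i'}$ for $i' \neq i$, i.e.\ that $\pi(P) \subseteq W_{i'}$ forces $W_{i'} = W_i$.

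The verification for $P$ splits on the value of $q$, and this is where the genuine content lies. If $q > 0$, hypothesis (c) gives that $P \cap S$ is a nonempty $(q-1)$-plane, so $0 \in \pi(P)$ and $\dim\pi(P) = (k-1)-(q-1) = k-q$; since $\pi(P) \subseteq \pi(L)$ and both are linear of dimension $k-q$, we get $\pi(P) = \pi(L) = W_i$, and then $\pi(P) \subseteq W_{i'}$ forces $W_{i'} = W_i$ by equality of dimensions. If $q = 0$, then $P \cap S = \emptyset$, so $\pi(P)$ is a $(k-1)$-dimensional affine subspace not passing through the origin, and its linear span $\spa(\pi(P))$ is therefore $k$-dimensional (here the $W_{i'}$ have dimension $k$). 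Any $W_{i'}$ containing $\pi(P)$ must contain $\spa(\pi(P))$ and hence equal it, so $W_{i'}$ is uniquely determined; and since $\pi(P) \subseteq \pi(L) = W_i$ gives $\spa(\pi(P)) = W_i$, we again conclude $W_{i'} = W_i$. I expect the main obstacle to be precisely this $q=0$ case: one cannot match dimensions of linear spaces directly as in $q>0$, but must instead exploit the affine, non-origin-containing nature of $\pi(P)$ to pin down its ambient subspace through its linear span.
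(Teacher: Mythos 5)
Your proof is correct and is essentially the paper's argument recast in quotient-space language: your $T_i = \pi^{-1}(W_i)$ are exactly the paper's planes $S + V_i$ (namely, all $(m+k-q)$-planes containing $S$, parametrized by $(k-q)$-dimensional subspaces of a complement of $S$), and your dimension counts in $F^n/S$ — including the special treatment of $q=0$ via the fact that $\pi(P)$ is affine and misses the origin, which rests on $P\cap S=\emptyset$ together with $L\cap S\neq\emptyset$ — mirror the paper's count of independent vectors showing $P$ cannot lie in two distinct $T_i$. The quotient formulation is a cleaner bookkeeping device, not a genuinely different route.
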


\begin{proof}
To find the $T_i$, write $S=x + \spa(e_1, \ldots, e_m)$ and 
\[
F^n = \spa(e_1, \ldots, e_m, f_1, \ldots f_{n-m}).
\]
Then write $T_i = S + V_i$ where, as $i$ varies, $V_i$ ranges over all $(k-q)$-dimensional subspaces of $\spa(f_1, \ldots, f_{n-m}).$

Fix some $P,L$ satisfying the hypotheses. One can check that there is an $i$ such that $L \subset T_i.$ For any $i' \neq i$ we have that $T_i \cap T_{i'}$ contains $S$ and is, at most, an $(m +  k-q - 1)$-plane. 

First consider the case $q > 0.$ Choose $y \in P \cap S$ and write 
\begin{align*}
P \cap S &= y + \spa(g_1, \ldots, g_{q-1}),\\
P &= y + \spa(g_1, \ldots, g_{q-1}, h_1, \ldots, h_{k-q}),\\
\intertext{and}
S &= y + \spa(g_1, \ldots, g_{q-1}, h'_{1}, \ldots, h'_{m + 1 -q}).
\end{align*}
Clearly, 
\[
W := \{g_1, \ldots, g_{q-1}, h_1, \ldots, h_{k-q}, h'_{1}, \ldots, h'_{m + 1 -q } \}
\]
is linearly independent and, since $S \subset T_i \cap T_{i'}$, $P \subset T_i \cap T_{i'}$ would imply $y + \spa(W) \subset T_i \cap T_{i'}$ contradicting the dimension estimate on the latter set.  

For the case $q=0$ write 
\begin{align*}
P &= z + \spa(h_1, \ldots, h_{k-1})\\
\intertext{and}
S &= y + \spa(h'_1, \ldots, h'_m).
\end{align*}
Again using the dimension estimate on $T_i \cap T_{i'}$, we see that if $P \subset T_{i} \cap T_{i'}$ then, since $P \cap S = \emptyset,$ we have $v \in \spa(h_1', \ldots, h'_m)$ for some $0 \neq v \in \spa(h_1, \ldots, h_{k-1}).$ But, since $L \cap S \neq \emptyset$, this implies $\dim(L \cap S) > 0$, contradicting the assumption that $\dim(L \cap S) = 0.$
\end{proof}

To estimate the union of lines or $k$-planes contained in each leaf of the foliation, we will appeal to recursion. However,  at the root we still use the classical method:
 
\begin{lemma} \label{classmethod}
Suppose that $\L$ is a collection of $m$-planes, $\P$ is a collection of $(k-1)$-planes such that for every $L \in \L$
\[
|\{P \in \P : P \subset L\}| \geq M,
\]
and 
\begin{equation} \label{mustpruneeq}
|\L| |F|^{k(m-k)} \leq M.
\end{equation}
Then
\[
|\P| \gtrsim |\L| M.
\]
\end{lemma}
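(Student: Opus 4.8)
The plan is to run a second-moment (bush-type) double count on the incidences between $\L$ and $\P$, in the spirit of the classical method, with the $(k-1)$-planes playing the role that points play in the planar Kakeya bound. Write $I := |\{(L,P) \in \L \times \P : P \subset L\}|$ for the total number of incidences and, for each $P \in \P$, let $\nu(P) := |\{L \in \L : P \subset L\}|$. Summing the hypothesis $|\{P \in \P : P \subset L\}| \geq M$ over $L \in \L$ gives the lower bound $I \geq |\L| M$, while summing over $P$ gives $I = \sum_{P \in \P} \nu(P)$. By Cauchy--Schwarz,
\[
(|\L| M)^2 \leq I^2 \leq |\P| \sum_{P \in \P} \nu(P)^2,
\]
so it suffices to show $\sum_{P} \nu(P)^2 \lesssim |\L| M$; combined with $I \geq |\L|M$ this yields $|\P| \gtrsim (|\L|M)^2/(|\L|M) = |\L|M$.

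To control the second moment I would expand
\[
\sum_{P \in \P} \nu(P)^2 = \sum_{(L,L') \in \L \times \L} |\{P \in \P : P \subset L \cap L'\}|
\]
and split into the diagonal $L = L'$ and the off-diagonal $L \neq L'$. The diagonal contributes exactly $I$, which is harmless: once the off-diagonal is shown to be $\lesssim |\L|M$ we get $\sum_P \nu(P)^2 \leq I + C|\L|M \leq (1+C)\,I$, whence $|\P| \geq I^2 / \sum_P \nu(P)^2 \gtrsim I \geq |\L|M$. Note that no a priori upper bound on how many $P$ can lie in a single $L$ is required, precisely because the diagonal term is absorbed into $I$ via $I \geq |\L|M$.

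The crux is the off-diagonal estimate, and this is where \eqref{mustpruneeq} enters. For distinct $m$-planes $L, L'$ the intersection $L \cap L'$ is an affine subspace of dimension at most $m-1$, so any $P \in \P$ with $P \subset L \cap L'$ is one of the $(k-1)$-planes contained in an (at most) $(m-1)$-plane. Counting affine subspaces through Gaussian binomials, the number of $(k-1)$-planes inside an $(m-1)$-plane is $\lesssim |F|^{(m-k)\cdot k} = |F|^{k(m-k)}$, with an implicit constant depending only on $m$ and $k$. Summing over the at most $|\L|^2$ off-diagonal pairs then gives an off-diagonal contribution $\lesssim |\L|^2 |F|^{k(m-k)} = |\L| \cdot \bigl(|\L|\,|F|^{k(m-k)}\bigr) \leq |\L| M$, using exactly the pruning hypothesis \eqref{mustpruneeq}. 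I expect this dimension count --- verifying that the exponent counting $(k-1)$-planes in an $(m-1)$-plane is precisely the $k(m-k)$ appearing in \eqref{mustpruneeq}, and that the associated combinatorial constant is independent of $F$ --- to be the only step needing genuine care; the remainder is the routine Cauchy--Schwarz bookkeeping outlined above.
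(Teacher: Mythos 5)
Your proposal is correct and is essentially the paper's own argument: a Cauchy--Schwarz second-moment bound on incidences, combined with the fact that two distinct $m$-planes meet in at most an $(m-1)$-plane containing $\lesssim |F|^{k(m-k)}$ $(k-1)$-planes, with \eqref{mustpruneeq} absorbing the off-diagonal term. The only (cosmetic) difference is bookkeeping for the diagonal: the paper truncates each $\P_L$ to cardinality between $M$ and $2M$ so the diagonal is $\leq 2M|\L|$, whereas you keep the full incidence sets and absorb the diagonal into $I$ using $I \geq |\L|M$, which works equally well.
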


\begin{proof}
For each $L \in \L$, let $\P_L$ be a subset of $\{P \in \P : P \subset L\}$ with $M \leq \P_L \leq 2M.$ Set
\[
\U = \{(P,L,L')  : (L,L') \in \L^2, P \in \P_L \cap \P_{L'} \}.
\]
An application of Cauchy-Schwarz gives 
\[
|\U| \geq \frac{M^2|\L|^2}{|\P|}.
\]
Any two distinct $m$-planes intersect in, at most, an $(m-1)$-plane. Since, by  \eqref{plcount} below, an $(m-1)$-plane contains $\lesssim |F|^{k(m-k)}$ $(k-1)$-planes, we have 
\begin{align*}
|\U| &\leq C |\L|^2 |F|^{k(m-k)} + |\{(P,L) : L \in \L, P \in \P_L\}| \\
&\leq C|\L|^2|F|^{k(m-k)} + |\L| 2 M \\
&\lesssim |\L|M.
\end{align*}
\end{proof}

We finish the section with three standard estimates for collections of planes.  

\begin{lemma} \label{grascount}
For integers $0 \leq k \leq d$ we have
\begin{equation} \label{sscount}
|G(d,k)| \approx |F|^{k(d-k)}
\end{equation}
and
\begin{equation} \label{plcount}
|G'(d,k)| \approx |F|^{(k+1)(d-k)}
\end{equation}
where $G(d,k)$ is the set of $k$-dimensional subspaces of $F^d$ and $G'(d,k)$ is the set of $k$-planes in $F^d.$ 
\end{lemma}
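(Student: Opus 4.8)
The plan is to reduce both estimates to the exact count of $k$-dimensional subspaces via a standard ordered-basis argument. Write $q = |F|$. First I would count the ordered linearly independent $k$-tuples $(v_1, \ldots, v_k)$ of vectors in $F^d$: having chosen $v_1, \ldots, v_{i-1}$ spanning an $(i-1)$-dimensional space, the vector $v_i$ may be any of the $q^d - q^{i-1}$ vectors outside that span, so the number of such tuples is $\prod_{i=0}^{k-1}(q^d - q^i)$. Since each $k$-dimensional subspace arises from exactly $\prod_{i=0}^{k-1}(q^k - q^i)$ ordered bases, dividing gives
\[
|G(d,k)| = \prod_{i=0}^{k-1} \frac{q^d - q^i}{q^k - q^i} = \prod_{i=0}^{k-1}\frac{q^{d-i} - 1}{q^{k-i}-1}.
\]

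Next I would extract the leading power and control the multiplicative error. Factoring $q^{d-i}$ out of each numerator and $q^{k-i}$ out of each denominator, the product of the leading powers is $q^{\sum_{i=0}^{k-1}(d-k)} = q^{k(d-k)}$, matching the exponent in \eqref{sscount}. The remaining correction is $\prod_{i=0}^{k-1} \frac{1 - q^{-(d-i)}}{1 - q^{-(k-i)}}$; since $d \geq k$ each factor lies in $[1,\, (1-q^{-1})^{-1}] \subset [1,2]$ for $q \geq 2$, so the entire product lies in $[1, 2^k]$. This yields $q^{k(d-k)} \leq |G(d,k)| \leq 2^k q^{k(d-k)}$, with constants depending only on $k$ (hence on $d$) and not on $F$, which is exactly \eqref{sscount}.

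Finally, for the affine count \eqref{plcount} I would observe that every $k$-plane is a coset $x + V$ of a unique $k$-dimensional subspace $V$, and that the cosets of a fixed $V$ are in bijection with the quotient $F^d / V$, of which there are $q^{d-k}$. Hence
\[
|G'(d,k)| = q^{d-k}\,|G(d,k)| \approx q^{(k+1)(d-k)},
\]
giving \eqref{plcount}.

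There is no real obstacle here: the lemma is entirely standard, and the only point requiring attention is confirming that the implicit constants are independent of $F$. This is immediate once one checks, as above, that the boundedly many correction factors $\frac{1 - q^{-a}}{1 - q^{-b}}$ with $a \geq b \geq 1$ are each bounded above by $(1-q^{-1})^{-1} \leq 2$ uniformly in $q \geq 2$, so that the product of these factors stays within a constant depending only on $k$.
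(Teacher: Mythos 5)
Your proposal is correct and follows essentially the same route as the paper: count ordered linearly independent $k$-tuples, divide by the number of ordered bases per subspace to get \eqref{sscount}, then multiply by the number of translates (your quotient $F^d/V$ is the paper's set of coset representatives spanned by a complementary basis) to get \eqref{plcount}. The only difference is that you carry exact product formulas and verify the uniform-in-$|F|$ bounds on the correction factors explicitly, where the paper argues with ``generic choice'' approximations.
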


\begin{proof}
A generic choice of $k$ vectors in $F^d$ is linearly independent, and there are approximately $|F|^{kd}$ such choices. By the same logic, each $k$-plane has approximately $F^{k^2}$ choices of basis, and hence we have \eqref{sscount}. 

Given a $k$-dimensional subspace $P$ with basis $e_1, \ldots, e_k$, choose \\ $f_1, \ldots, f_{d-k}$ so that $F^d = \spa(e_1, \ldots, e_k, f_1, \ldots, f_{d-k})$. Then there is a one-to-one correspondence between linear combinations of $f_1, \ldots, f_{d-k}$ and distinct translates of $P$, giving \eqref{plcount}.
\end{proof}

\begin{lemma} \label{sccprop}
Suppose that $S$ is an $l$-plane in $F^m.$ Then for $l < l' \leq m$
\begin{equation} \label{scontainedcount}
|\{P \subset F^m : P \text{\ is a\ } l'\text{-plane\ and\ } S \subset P\}| \lesssim |F|^{(l' - l)(m - l')}.
\end{equation}
\end{lemma}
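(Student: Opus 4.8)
The plan is to reduce the count of $l'$-planes containing $S$ to the count of subspaces in a quotient space, and then invoke the subspace estimate \eqref{sscount} already established in Lemma \ref{grascount}. First I would write $S = x + V$, where $V = \spa(e_1, \ldots, e_l)$ is the $l$-dimensional linear subspace parallel to $S$. The key structural observation is that every $l'$-plane $P$ with $S \subset P$ has the form $P = x + W$ for a unique $l'$-dimensional linear subspace $W$ with $V \subset W$. Indeed, writing $P = y + W$ with $\dim W = l'$, the containment $S \subset P$ forces $x - y \in W$ (take the point $x \in S$) and then $v \in W$ for every $v \in V$; hence $V \subset W$ and $P = x + W$. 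Conversely, any such $W \supset V$ yields an $l'$-plane $x + W \supset S$.

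Thus the set I must bound is in bijection with
\[
\{W : V \subset W \subset F^m, \ \dim W = l'\},
\]
the collection of $l'$-dimensional subspaces of $F^m$ containing the fixed $l$-dimensional subspace $V$. Passing to the quotient $F^m / V \cong F^{m-l}$, this collection is in bijection with the $(l'-l)$-dimensional subspaces of $F^{m-l}$, i.e.\ with $G(m-l, l'-l)$. Applying \eqref{sscount} with $d = m-l$ and $k = l'-l$ gives
\[
|\{W : V \subset W, \dim W = l'\}| = |G(m-l, l'-l)| \approx |F|^{(l'-l)((m-l)-(l'-l))} = |F|^{(l'-l)(m-l')},
\]
which is exactly the desired bound \eqref{scontainedcount}.

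There is no real obstacle here: the only point requiring care is verifying the correspondence between $l'$-planes through $S$ and subspaces of the quotient, which amounts to checking that the direction space of $P$ must contain $V$ and that the quotient map is a bijection on the relevant subspaces. Since the statement only asserts an upper bound ($\lesssim$), I do not even need the two-sided estimate $\approx$ from \eqref{sscount}; the upper half suffices, so the implicit constant can be absorbed cleanly.
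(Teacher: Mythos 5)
Your proof is correct and takes essentially the same route as the paper: both reduce the count of $l'$-planes containing $S$ to the count of $(l'-l)$-dimensional subspaces of an $(m-l)$-dimensional space and then invoke \eqref{sscount}. The only cosmetic difference is that you pass to the quotient $F^m/V$ while the paper works with an explicit complementary subspace $\spa(f_1,\ldots,f_{m-l})$, which are interchangeable formulations of the same bijection.
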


\begin{proof}
Write $S = x + \spa(e_1, \ldots, e_l)$ and 
\[
F^m = \spa(e_1, \ldots, e_l, f_1, \ldots, f_{m-l}).
\] 
Then there is a one-to-one correspondence between $l'$-planes $P \supset S$ and $(l'-l)$-dimensional subspaces of $\spa(f_1, \ldots, f_{m-l})$, so \eqref{scontainedcount} follows from \eqref{sscount}.
\end{proof}

\begin{lemma} \label{eiprop}
Suppose $S$ is an $l$-plane in $F^k.$ Then
\[
|\{P \subset F^k : P \text{\ is a\ } (k-1)\text{-plane\ and\ } P \cap S = \emptyset\}| \lesssim |F|^{k-l}.
\]
\end{lemma}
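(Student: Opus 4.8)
The plan is to parametrize each candidate hyperplane by its direction and its translate, exactly as in the proofs of Lemmas \ref{grascount} and \ref{sccprop}, and to count these two pieces separately. First I would write $S = x + V$ with $V = \spa(e_1, \ldots, e_l)$, and write an arbitrary $(k-1)$-plane as $P = y + W$ with $W$ a $(k-1)$-dimensional subspace of $F^k$. The task is then to bound the number of pairs $(W, y+W)$ for which $P \cap S = \emptyset$.

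The key point will be a dimension dichotomy showing that disjointness forces the direction $V$ of $S$ to be contained in the direction $W$ of $P$. Indeed, $P \cap S \neq \emptyset$ exactly when $y - x \in W + V$. If $V \not\subseteq W$, then since $W$ is a hyperplane, $W \cap V$ is a hyperplane in $V$, so $\dim(W + V) = (k-1) + l - (l-1) = k$; thus $W + V = F^k$ and $P \cap S$ is automatically nonempty. Hence every $P$ disjoint from $S$ must satisfy $V \subseteq W$. (When $l = 0$ this case is vacuous, as $V = \{0\} \subseteq W$ always.)

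It then remains to count the admissible directions and, for each, the admissible translates. Since a $(k-1)$-plane containing the linear subspace $V$ automatically contains $0$ and is therefore itself a subspace, Lemma \ref{sccprop} (applied with the $l$-plane taken to be $V$ and $l' = k-1$) bounds the number of admissible $W$ by $\lesssim |F|^{(k-1-l)(k-(k-1))} = |F|^{k-1-l}$; in the boundary case $l = k-1$ there is only the single direction $W = V$, which is consistent with this bound. For each fixed such $W$, the containment $V \subseteq W$ gives $S \subseteq x + W$, so among the $|F|$ distinct translates of $W$ exactly one contains $S$ and the remaining $|F|-1$ are disjoint from it; in particular there are at most $|F|$ admissible translates. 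Multiplying, I obtain $\lesssim |F|^{k-1-l} \cdot |F| = |F|^{k-l}$, as claimed.

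I do not expect a serious obstacle here: the only step requiring genuine thought is the dichotomy in the second paragraph, namely that $V \not\subseteq W$ forces $W + V = F^k$ and hence forces intersection. This is what eliminates all configurations except the ``parallel'' ones and collapses the problem to a codimension-one count; once it is established, both enumerations are immediate applications of the counting lemmas already proved.
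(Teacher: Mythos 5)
Your proof is correct and takes essentially the same route as the paper's: disjointness forces the direction of $P$ to contain the direction of $S$, so $P$ is a translate of a hyperplane whose direction contains $V$, and the count follows from Lemma \ref{sccprop} times the $|F|$ translates. The only difference is that you supply the dimension-count justification (via $\dim(W+V) = k$ when $V \not\subseteq W$) for the containment step, which the paper asserts without proof.
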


\begin{proof}
Write $S = x + \spa(e_1, \ldots, e_l)$, and fix $P = y + \spa(f_1, \ldots, f_{k-1}).$ If $P \cap S = \emptyset$, we must have $e_j \in \spa(f_1, \ldots, f_{k-1})$ for $j = 1, \ldots, l.$ Thus, $P$ is a translate of a $(k-1)$-plane containing $S$. Since, by Lemma \ref{sccprop}, there are $\lesssim |F|^{k-l-1}$ $(k-1)$-planes containing $S$, we have at most $|F|^{k-l}$ possible planes $P$. 
\end{proof}

\section{Unions of lines} \label{linesection}
Theorems \ref{firstlinethm} and \ref{wolffrelaxed} follow immediately from:
\begin{proposition} \label{linetheorem}
Suppose $d \geq 1$ is an integer, that $0 < \gamma,\lambda \leq 1$, that $\max(1 - d,-1) \leq \beta \leq 1,$ that $\L$ is a collection of lines in $F^n$ with 
\[
|\L| \geq \gamma |F|^{2(d-1)+\beta},
\]
and that $\P$ is a collection of points in $F^n$ satisfying 
\[
|\{P \in \P : P \in L\}| \geq \lambda |F|
\]
for every $L \in \L.$ Then
\begin{equation} \label{nondpwaresult}
|\P| \gtrsim  |F|^{d +\max(0,\beta)}.
\end{equation}
where the implicit constant may depend on $d,\gamma,\lambda.$ Furthermore, if $d \geq 2$ and $\L$ satisfies the $d$-plane Wolff axiom \eqref{dpwa} then we have
\begin{equation} \label{dpwaresult}
|\P| \gtrsim |F|^{d + \frac{\beta + 1}{2}}.
\end{equation}
\end{proposition}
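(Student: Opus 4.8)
The plan is to prove Proposition \ref{linetheorem} by induction on $d$, establishing the general bound \eqref{nondpwaresult} and the improved bound \eqref{dpwaresult} at the same time. For the base case $d=1$ one has $0\le\beta\le1$ together with $\gtrsim|F|^{\beta}$ lines, each carrying $\ge\lambda|F|$ points of $\P$; since two distinct lines meet in at most one point, the classical method of Lemma \ref{classmethod} (with $m=1$, after discarding lines so that \eqref{mustpruneeq} holds) gives $|\P|\gtrsim|F|^{1+\beta}$, which is the desired $|F|^{d+\max(0,\beta)}$. For the inductive step I would first apply Lemma \ref{pslemma} with $k=1$ to produce a scale $m$ with $1\le m\le d$, bushes $\L_{R_j}$ lying in $m$-planes $R_j$, and the pruned collection $\L^m=\bigcup_j\L_{R_j}$ with $|\L^m|\gtrsim|\L|$ satisfying the intermediate Wolff axioms (property (\ref{genwolffcondition})): no $m'$-plane with $m<m'\le d$ contains $|F|^{2m'-3}$ lines of $\L^m$. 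The first payoff is that each bush can be \emph{filled}: when $m\ge2$, $R_j\cong F^m$ contains $\ge|F|^{2m-3}=|F|^{2((m-1)-1)+1}$ lines, so the inductive hypothesis (the case $d=m-1$, $\beta=1$) yields $|\P\cap R_j|\gtrsim|F|^{m}$, with the same holding trivially for $m=1$. A line count then produces $N\gtrsim|F|^{2(d-m)+\beta}$ such filled $m$-planes.

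When $m=d$ the bushes are filled $d$-planes and there is no higher scale to recurse into; here a direct inclusion--exclusion estimate suffices, since distinct $d$-planes meet in at most a $(d-1)$-plane. Retaining $\approx\min(N,|F|)$ of them and subtracting the pairwise intersections gives $|\P|\gtrsim|F|^{d+\beta}$. It is worth noting that under the $d$-plane Wolff axiom \eqref{dpwa} this case cannot occur, because a scale-$d$ bush would contain $\ge|F|^{2d-3}$ lines inside a single $d$-plane, contradicting \eqref{dpwa}; thus for the improved bound we always fall into the range $m\le d-1$.

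The heart of the matter is the case $m<d$, which I would treat by an iterated hairbrush. Fixing a good stem $m$-plane $S$, apply the foliation Lemma \ref{folprop} (with $k=1$, $q=0$) to partition the ambient space into $(m+1)$-planes $T_i\supset S$, arranged so that any line meeting $S$ in a single point lies in a unique leaf and its points off $S$ are not shared between leaves; hence $|\P|\gtrsim\sum_i|\P\cap(T_i\setminus S)|$. Inside each leaf $T_i\cong F^{m+1}$ the stem $S$ is a hyperplane, the lines of $\L^m$ crossing it form a bush whose size $t_i$ is constrained by the intermediate Wolff axiom at scale $m+1$, and one estimates $|\P\cap(T_i\setminus S)|$ from below — by the classical method of Lemma \ref{classmethod} when $t_i\lesssim|F|$, and by a further recursion otherwise — and sums over $i$. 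To force the sum to be large, one selects $S$ by double counting the incidences between the lines of $\L^m$ and the filled bushes, the intermediate Wolff axioms preventing the lines from concentrating in any plane of dimension $>m$ and so guaranteeing a stem met by a hairbrush of the expected size; the $d$-plane Wolff axiom sharpens exactly this incidence count, which is what upgrades \eqref{nondpwaresult} to \eqref{dpwaresult}.

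I expect the main obstacle to be precisely the per-leaf estimate together with its summation in the regime $m\ge2$, where a single leaf may be far denser than the $t_i\lesssim|F|$ threshold that makes the classical method linear; taming such over-concentrated leaves is what compels the argument to recurse across all the intermediate scales $m<m'\le d$ and is the true source of the ``iteration.'' A secondary difficulty is the bookkeeping of the parameter ranges, since the filling step is invoked at the endpoint $\beta=1$ and the leaf step near $\beta=-1$: one must check that every recursive call stays within $\max(1-d,-1)\le\beta\le1$ and that the implicit constants, permitted to depend on $d$ but on nothing else, do not deteriorate as the recursion unwinds.
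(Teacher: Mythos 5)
Your overall skeleton does match the paper's: induction on $d$, the base case via Lemma \ref{classmethod}, the decomposition of Lemma \ref{pslemma}, the split into $m=d$ and $m<d$ (your inclusion--exclusion over $\min(N,|F|)$ filled $d$-planes is essentially the paper's application of Lemma \ref{classmethod}), the observation that the $d$-plane Wolff axiom forces $m<d$, and the foliation of Lemma \ref{folprop} with per-leaf recursion controlled by property (\ref{genwolffcondition}) of Lemma \ref{pslemma}. The genuine gap is in the heart of the case $m<d$: the construction of the large hairbrush. You fill all the bushes with points of $\P$ first and then propose to ``select $S$ by double counting the incidences between the lines of $\L^m$ and the filled bushes,'' crediting the intermediate Wolff axioms with guaranteeing a stem met by a hairbrush of the expected size. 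Neither half of this works. A bush filled with arbitrary points of $\P$ carries no lower bound on the number of lines of $\L^m$ meeting it (each of its $\gtrsim|F|^m$ points could lie on just one line of $\L$), the count of pairs $(L,R_j)$ with $L\cap R_j\neq\emptyset$ has no useful lower bound either, and the intermediate Wolff axioms play no role in producing the hairbrush --- in the paper they serve only to cap $|\L_i|$ in each leaf so that the induction applies there with some $d'\leq m$. What is missing is the popularity dichotomy: either $|\P| \geq \frac{1}{2}\lambda|F|\,|\L^m|/(C|F|^{d-1-\frac{1-\beta}{2}})$, in which case one is already done, or else one passes to the set $\P^{\sharp}$ of points lying on at least $C|F|^{d-1-\frac{1-\beta}{2}}$ lines and to the set $\L^{\sharp}$ of lines containing $\gtrsim\lambda|F|$ such points, and shows $|\L^{\sharp}|\gtrsim|\L^m|$. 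The order of operations is essential: one first refines to $\L^{\sharp}$, then chooses a bush $R_j$ with $|\L^{\sharp}\cap\L_{R_j}|\gtrsim|\L_{R_j}|$, and only then fills $R_j$ by induction using those lines, so that the stem is filled with \emph{popular} points, each of which emits $\gtrsim C|F|^{d-1-\frac{1-\beta}{2}}$ lines, of which only $O(|F|^{m-1})$ stay inside $R_j$ by Lemma \ref{sccprop}. This is the only source of the hairbrush size $|F|^{m+d-1-\frac{1-\beta}{2}}$, hence of the exponent $d+\frac{\beta+1}{2}$ after dividing by $|F|^{m-2}$ in the leaf summation; without it the argument does not close. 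Contrary to your closing assessment, the per-leaf estimate and its summation are the routine part; the refinement preceding the filling of the stem is the crux.

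A secondary inconsistency: at the end you claim the $d$-plane Wolff axiom ``sharpens exactly this incidence count, which is what upgrades \eqref{nondpwaresult} to \eqref{dpwaresult}.'' That contradicts your own earlier, correct remark. In the paper the case $m<d$ yields $|F|^{d+\frac{\beta+1}{2}}$ unconditionally, with no reference to the $d$-plane Wolff axiom; the axiom's sole function is to exclude the case $m=d$, so that the stronger bound always applies.
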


\begin{proof}[Proof of Theorem \ref{qethm} assuming Proposition \ref{linetheorem}]
Suppose that $\L$ satisfies \eqref{qebound} and that $|F| \geq M$ where $M$ is large and to be determined later.

 For $d$-planes $R$ let $\L_R = \{L \in \L : L \subset R\}$. Choose $d$-planes $R_1, \ldots, R_N$ so that for each $j$, $|\L_{R_j}| \geq |F|^{2d-3}$ and such that for $R \not\in \{R_j\}_{j=1}^N$ we have $|\L_{R}| < |F|^{2d-3}. $

Setting $\L' = \bigcup_{j}\L_{R_j}$, $\L'' = \L \setminus \L'$ and $\P'' = \bigcup_{L \in \L''}L$, we must have $|\L''| < \frac{1}{2}|\L'|$ or 
else we would have
\[
|\P| \geq |\P''| \geq c' |F|^{d + \frac{\beta + 1}{2}} > C |F|^{d+\beta}
\]
where $c'$ is the implicit constant from \eqref{dpwaresult} and $M$ is chosen large enough to overwhelm $\frac{C}{c'}$ (In the second inequality above we have used the fact that $\L''$ satisfies \eqref{dpwa} to obtain \eqref{dpwaresult} from Proposition \ref{linetheorem}.) 

Thus, $|\L'| \geq \frac{1}{2}|\L|$ and, by \eqref{plcount} with $k=1$, $N \geq c |F|^{\beta}.$ Letting $\P' = \bigcup_{L \in \L'}L$ we have
\[
|\P'| \geq c'' |F|^{d + \beta} \geq c C|F|^{d + \beta} \geq c |\P|
\]
where $c''$ is the implicit constant from \eqref{nondpwaresult}, $c$ is chosen small enough to underwhelm $\frac{c''}{C},$ and the last inequality follows from \eqref{qebound}.

A final application of $\eqref{nondpwaresult}$ with $d' = d-1$ and $\beta' = 1$ then gives \eqref{denseindplane} since $|\L_{R_j}| \geq |F|^{2(d-1-1)+1}.$
\end{proof}

\begin{proof}[Proof of Proposition \ref{linetheorem}]
We use induction on $d$. When $d=1$ the result follows from an application of Lemma \ref{classmethod} (with $m=1$, $k=1$, and $\L$ replaced by a subset of itself with cardinality $\approx \min(\gamma,\lambda) |F|^\beta$ ), so we will prove it for $d > 1$, working under the assumption that it has already been proven for $1 \leq d' < d.$

After possibly deleting lines, we may assume
\begin{equation} \label{refinedkk1}
|\L| < 2\gamma |F|^{2(d-1)+\beta}.
\end{equation}
Applying Lemma \ref{pslemma} to $\L$ we obtain $m$-planes $R_1, \ldots, R_N.$ Note that if $L$ satisfies the $d$-plane Wolff axiom \eqref{dpwa} then we must have $m < d.$ 

\ 

\noindent{\bf Case 1, $(m=d)$:}\\ 
Let $\P_{R_j} = \{P \in \P : P \in R_j\}.$ Applying the case $d' = d-1$ of the proposition to the $\L_{R_j}$, we deduce that $|\P_{R_j}| \gtrsim |F|^{d}$ for each $j$. Since, by \eqref{plcount}, $|\L_{R_j}| \lesssim |F|^{2(d-1)}$, we have $N \gtrsim |F|^{\max(0,\beta)}.$ Then using Lemma \ref{classmethod} (possibly applying it to a subset of $\{R_j\}_{j=1}^N$ in order to satisfy \eqref{mustpruneeq}) to estimate $|\bigcup_{j}\P_{R_j}|$ shows that 
\[
|\P| \gtrsim  |F|^{d + \max(0,\beta)}
\]
as desired.
 
\ 

\noindent{\bf Case 2, $(m < d)$:}\\
We construct sets of lines and points using a standard ``popularity'' argument. Fix some large $C$ to be determined later and let 
\[
\P^{\sharp} = \{P \in \P : |\{L \in \L : P \in L\}| \geq C |F|^{d-1 - \frac{1-\beta}{2}}\}
\]
and
\[
\L^{\sharp} = \{L \in \L : |\{P \in \P^{\sharp} : P \in L\}| \geq \frac{1}{4} \lambda |F|\}.
\]
Letting $\L^m$ be as in Lemma \ref{pslemma}, we then have either 
\begin{align} \label{firstpossibilityk1}
|\P| &\geq \frac{1}{2}\lambda |F|  |\L^m|/(C|F|^{(d-1) - \frac{1-\beta}{2}})
\end{align}
(in which case we are finished since the right hand side above is $\gtrsim |F|^{d + \frac{\beta+1}{2}}$) or
\begin{equation} \label{secondpossibilityk1}
|\L^{\sharp}| \geq \frac{1}{8} |\L^m|
\end{equation}

Indeed, suppose that \eqref{firstpossibilityk1} does not hold. Set
\[
I = \{(P,L) : P \in \P_L, L \in \L^{m}\}
\]
where, for each $L$, $\P_L$ is a subset of $\P \cap L$ with $\lambda |F| \leq |\P_L|  < 2 \lambda |F|.$ 
Then letting 
\[
I' = \{(P,L) : P \in \P_L \setminus \P^{\sharp} , L \in \L^{m}\}
\]
we have
\[
|I'| < C |F|^{d-1 - \frac{1-\beta}{2}} |\P| < \frac{1}{2} |I|
\]
and so
\[
|\{(P,L) : P \in \P_L \cap \P^{\sharp}, L \in \L^{m}\}| \geq \frac{1}{2} \lambda |F| |\L^{m}|
\]
giving 
\[
|\{(P,L) : P \in \P_L \cap \P^{\sharp}, L \in \L^{\sharp}\}| \geq \frac{1}{4} \lambda |F||\L^{m}|
\]
thus leading (by the upper bound on $|\P_L|$) to \eqref{secondpossibilityk1} as claimed.

Let $\L^{\sharp}_{R_j} = \L_{R_j} \cap \L^{\sharp}$, $\P^{\sharp}_{R_j} = R_j \cap \P^{\sharp}$, 
\[
\L'_{R_j} = \{L \in \L^m : |L \cap R_j| = 1\},
\]
and
\[
\P'_{R_j} = \P \setminus R_j.
\]
Fix $j$ so that $|\L^{\sharp}_{R_j}| \gtrsim |\L_{R_j}|,$ and recall $|\L_{R_j}| \geq |F|^{2m-3}$ by Lemma \ref{pslemma}.
Applying the previously known case $d' = m-1$ of the proposition (or using the trivial estimate if $m=1$) we have
\begin{equation} \label{lequalszeroPk1}
|\P^{\sharp}_{R_j}| \gtrsim |F|^{m}.
\end{equation}

For each point $P \in \P^{\sharp}_{R_j}$ there are $\geq C |F|^{d-1 -  \frac{1-\beta}{2}}$ lines from $\L^{m}$ intersecting $P$. Since, by Lemma \ref{sccprop}, $\lesssim |F|^{m-1}$ of these lines are contained in $R_j$, we have
\[
|\{L \in \L'_{R_j} : P \in L\}| \geq \frac{1}{2} |F|^{d - 1 - \frac{1-\beta}{2}}
\]
provided that $C$ is chosen sufficiently large. 
Thus, 
\begin{align*}
|\L'_{R_j}| &\gtrsim |\P^{\sharp}_{R_j}| |F|^{d - 1 - \frac{1 - \beta}{2}} \\
&\gtrsim |F|^{m + d-1 - \frac{1-\beta}{2}}.
\end{align*}
Note\footnote{We may assume throughout that $|F|$ is sufficiently large relative to certain parameters (for instance $\lambda$) since the implicit constants may be chosen so that the conclusion holds trivially for small $|F|$.} that for each $L \in \L'_{R_j}$, $|L \cap \P'_{R_j}| \geq \lambda |F| - 1 \gtrsim |F|.$

Applying Lemma \ref{folprop}, we write $F^n$ as the union of $(m + 1)$-planes $T_i$ containing $R_j$. Let
\begin{align*}
\L_i &= \{L \in \L'_{R_j} : L \subset T_i\} \\ 
\P_i &= \{P \in \P'_{R_j} : P \in T_i\}.
\end{align*}
Then 
\begin{align*}
|\P| &\geq \sum_i |\P_i| \\ 
&\gtrsim \sum_i |\L_i|/|F|^{m-2} \\ 
&\geq |\L'_{R_j}|/|F|^{m - 2}\\ 
&\gtrsim |F|^{d +\frac{\beta+1}{2}}
\end{align*}
where, for the second inequality, we used the fact (which follows from Lemma \ref{pslemma}) that $|\L_i| < |F|^{2(m -1) + 1} $ to see that $|\L_i| = |F|^{2(d' - 1) + \beta'}$ for some $d' \leq m$ and so we can estimate $|\P_i|$ using the previously known case $d'$ of \eqref{nondpwaresult}.
\end{proof}

\section{Unions of Planes} \label{planesection}
Theorem \ref{upthm2} is obtained by induction from the hyperplane case:
\begin{proposition} \label{maintheoremcd1}
Suppose $d,k > 0$ are integers, that $0 < \gamma,\lambda \leq 1$, that  $d \geq k$, that $0 \leq \beta \leq k$, that $\L$ is a collection of $k$-planes in $F^n$ with 
\[
|\L| \geq \gamma |F|^{(k+1)(d-k)+\beta},
\]
and that $\P$ is a collection of $(k-1)$-planes in $F^n$ satisfying 
\[
|\{P \in \P : P \subset L\}| \geq \lambda |F|^{k}
\]
 for every $L \in \L.$ Then
\[
|\P| \gtrsim  |F|^{k(d - k + 1)+\beta}
\]
where the implicit constant may depend on $d,\gamma,\lambda, k.$
\end{proposition}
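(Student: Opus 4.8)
The goal is to prove Proposition \ref{maintheoremcd1}, which is the hyperplane ($k' = k-1$) case that seeds the induction for Theorem \ref{upthm2}. The plan is to mimic the structure of the proof of Proposition \ref{linetheorem} exactly, running an induction on $d$ with $k$ fixed, and replacing every ``line = $1$-plane / point = $0$-plane'' step by its ``$k$-plane / $(k-1)$-plane'' analogue, using the generalized machinery of Section \ref{prsection} (Lemmas \ref{pslemma}, \ref{folprop}, \ref{classmethod}, \ref{sccprop}) which were all stated at the $k$-plane level of generality precisely for this purpose. The base case $d = k$ is handled directly by Lemma \ref{classmethod}: there $|\L| \gtrsim \gamma |F|^\beta$ with $0 \leq \beta \leq k$, each $L$ contains $\gtrsim \lambda |F|^k$ members of $\P$, and after passing to a subcollection of $\L$ to ensure the pruning hypothesis \eqref{mustpruneeq} holds, Lemma \ref{classmethod} gives $|\P| \gtrsim |\L| \cdot \lambda|F|^k \gtrsim |F|^{k + \beta} = |F|^{k(d-k+1)+\beta}$.

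For the inductive step $d > k$, I would first normalize by deleting $k$-planes so that $|\L| < 2\gamma |F|^{(k+1)(d-k)+\beta}$, then apply Lemma \ref{pslemma} (with the given $d$) to extract $m$-planes $R_1, \ldots, R_N$ with $k \leq m \leq d$ and the associated subcollections $\L_{R_j}$, $\L^m = \bigcup_j \L_{R_j}$. The argument splits into Case $m = d$ and Case $m < d$. In the $m = d$ case, I apply the inductive hypothesis (case $d' = d-1$) inside each $R_j$ to the pair $(\L_{R_j}, \P_{R_j})$ where $\P_{R_j} = \{P \in \P : P \subset R_j\}$, obtaining $|\P_{R_j}| \gtrsim |F|^{k(d-k)+k} = |F|^{kd - k^2 + k}$; combined with the upper bound $|\L_{R_j}| \lesssim |F|^{(k+1)(d-k)}$ from \eqref{plcount} this forces $N \gtrsim |F|^{\max(0,\beta)}$ (one must check $\beta \le k$ causes no issue here, taking $\beta' = \min(\beta,\dots)$ appropriately), and then Lemma \ref{classmethod} applied to the $R_j$ (viewed as $d$-planes each rich in members of $\P$) closes the estimate. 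The bookkeeping in matching the exponent $(k+1)(d-k)+\beta$ against the target $k(d-k+1)+\beta$ under the induction is the first place to be careful.

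The substantive case is $m < d$, which runs Wolff's hairbrush via a popularity argument. I would set a threshold and define $\P^\sharp$ to be the points (here, $(k-1)$-planes) lying in many $k$-planes of $\L$, and $\L^\sharp$ the $k$-planes still containing a definite fraction of $\P^\sharp$; a Cauchy--Schwarz / double-counting dichotomy then yields either the desired bound directly or $|\L^\sharp| \gtrsim |\L^m|$. Fixing a good $R_j$ with $|\L^\sharp_{R_j}| \gtrsim |\L_{R_j}| \geq |F|^{(k+1)(m-1-k)+k}$, I apply the inductive case $d' = m-1$ \emph{inside} $R_j$ to bound $|\P^\sharp_{R_j}|$ from below, then transfer richness to the transverse $k$-planes $\L'_{R_j} = \{L \in \L^m : \dim(L \cap R_j) = \text{appropriate}\}$, using Lemma \ref{sccprop} to discard the $\lesssim$ (planes contained in $R_j$) that fail to be transverse. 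Finally Lemma \ref{folprop} foliates $F^n$ into $(m + k - q)$-planes $T_i$ containing $R_j$ (with the correct intersection dimension $q$), and summing the inductively-estimated contributions $|\P_i|$ over the leaves — using condition \eqref{genwolffcondition} of Lemma \ref{pslemma} to guarantee each leaf-count falls in the range where a \emph{previously known} case of the proposition applies — yields $|\P| \gtrsim |F|^{k(d-k+1)+\beta}$.

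The main obstacle I anticipate is getting the dimension-counting exponents and the intersection parameters $q$ in Lemma \ref{folprop} exactly right: in the line case a hairbrush line meets $R_j$ in a point ($q=0$ or $q=1$ depending on configuration) and the foliation uses $(m+1)$-planes, but in the $k$-plane setting the generic transverse intersection $L \cap R_j$ has some dimension $q$ that must be tracked, the foliation leaves have dimension $m + k - q$, and the loss factors $|F|^{(k-1)(\cdots)}$ coming from Lemma \ref{sccprop} and from the ``$(m-1)$-plane contains $\lesssim |F|^{k(m-k)}$ many $(k-1)$-planes'' estimate must combine to reproduce exactly the target exponent. I expect that, as in Proposition \ref{linetheorem}, the threshold in the popularity step should be of order $|F|^{(k+1)(d-k) - k(\cdots)}$ chosen so that the two horns of the dichotomy meet the target, and verifying that the leaf-wise counts $|\L_i|$ satisfy the hypothesis of a lower case $d'$ of the induction (rather than overflowing past $d$) is where condition \eqref{genwolffcondition} must be invoked with care.
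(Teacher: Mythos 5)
Your base case and Case 1 ($m=d$) match the paper, but your Case 2 has a genuine gap: a \emph{single} popularity step followed by a \emph{single} hairbrush/foliation cycle, which is exactly the structure of Proposition \ref{linetheorem}, does not close the induction once $k \geq 2$. Run the numbers: to make the first horn of your dichotomy meet the target $|F|^{k(d-k+1)+\beta}$, the popularity threshold is forced to be $\approx C|F|^{d-k}$. In the second horn, the inductive case $d'=m-1$ inside $R_j$ gives $|\P^{\sharp}_{R_j}| \gtrsim |F|^{k(m-k+1)}$, and the $(k-1)$-planes produced there lie \emph{inside} $R_j$, so any $L \in \L^{\sharp}$ containing one of them meets $R_j$ in a $(k-1)$-plane; one hairbrush transfer plus the foliation of Lemma \ref{folprop} (necessarily with $q = k-1$, leaves of dimension $m+1$) then yields at best $|\P| \gtrsim C|F|^{k(m-k+1)+d-m+k}$. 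The deficit against the target is $(k-1)(d-m) + \beta - k$: for $k=1$ this is $\beta - 1 \leq 0$, which is why the line argument closes in one pass, but for $k \geq 2$ it is positive whenever $\beta > k-(k-1)(d-m)$ — e.g.\ already for $\beta > 1$ when $m = d-1$, and for essentially all $\beta$ when $m \leq d-2$. So the single cycle cannot cover the full range $0 \leq \beta \leq k$.

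What the paper actually does in Case 2 is different in three linked ways. First, the popularity construction is \emph{iterated}: nested families $\P^{\sharp,q}, \L^{\sharp,q}$ for $0 \leq q \leq k$, so that richness survives repeated refinement (a single $\L^{\sharp}$ is not enough, because after one hairbrush cycle you need the new planes to still be rich in the \emph{surviving} popular $(k-1)$-planes). Second, the hairbrush is applied $k$ times in a descending iteration over the intersection dimension: from $(k-1)$-planes meeting $R_j$ in $(q-1)$-planes one passes to $k$-planes meeting $R_j$ in $(q-1)$-planes (\eqref{equationA} $\Rightarrow$ \eqref{equationB}), then via the foliation into $(m+k-q+1)$-planes back to $(k-1)$-planes meeting $R_j$ in $(q-2)$-planes (\eqref{equationstar}), each cycle gaining a factor $C|F|^{d-m-k+2q}$; these gains must accumulate over all $q$ to reach the target. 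Third — and this is the piece your plan has no analogue of — the iteration can run down to $q=0$ only when $m+k \leq d$; when $m+k > d$ the dimension constraints \eqref{firstiterationrequirement}, \eqref{seconditerationrequirement} halt it early, and the proof then concludes not by bounding $|\P|$ but by \emph{contradiction}: the accumulated powers of $C$ in the lower bound \eqref{toomanykplanes} for $|\L^{\sharp,k-(d+1-m)}_{R_j}|$ overwhelm the normalization \eqref{refinedk}, showing the second horn of the dichotomy is impossible, so \eqref{firstpossibility} must have held all along. Without this iterate-until-contradiction mechanism, the common case $m + k > d$ (e.g.\ $m = d-1$) is out of reach of your outline.
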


\begin{proof}[Proof of Theorem \ref{upthm2} assuming Proposition \ref{maintheoremcd1}]
Theorem \ref{upthm2} follows directly from Proposition \ref{maintheoremcd1} when $k-k' = 1$. Fix $k_0 \geq 1$ and assume that the theorem holds for all $k-k' = k_0,$ and fix some $k,k'$ with $k-k' = k_0 + 1.$ 

For $L \in \L$ satisfying \eqref{upthm1hyp} let 
\[
\P'_L = \{(k'+1)\text{-planes\ }P' : P' \subset L\}
\] 
and for any $(k'+1)$-plane $P'$ let 
\[
\P''_{P'} = \{k'\text{-planes\ }P : P \subset P'\}.
\]
Applying the previously known case of the theorem, we have
\[
|\bigcup_{L \in \L}\P'_L| \gtrsim |F|^{(k' + 2)(d - (k' + 1)) + \beta}
\]
and thus a second application of Proposition \ref{maintheoremcd1} gives
\[
|\bigcup_{L \in \L}\P_L| = |\bigcup_{L} \bigcup_{P' \in \P'_L} \P''_{P'}| \gtrsim |F|^{(k' + 1)(d - k') + \beta}.
\]
\end{proof}

\begin{proof}[Proof of Proposition \ref{maintheoremcd1}]
We use induction on $d$. When $d=k$ the result follows from an application of Lemma \ref{classmethod} (with $m=k$). So we will prove it for $d > k$, working under the assumption that it has already been proven for $k \leq d' < d.$

After possibly deleting planes, we may assume
\begin{equation} \label{refinedk}
|\L| < 2\gamma |F|^{(k+1)(d-k)+\beta}.
\end{equation}
Applying Lemma \ref{pslemma} to $\L$ we obtain $m$-planes $R_1, \ldots, R_N.$

\ 

\noindent{\bf Case 1, $(m=d)$:}\\ 
Let $\P_{R_j} = \{P \in \P : P \subset R_j\}.$ Applying the case $d' = d-1$ of the theorem to the $\L_{R_j}$, we deduce that $|\P_{R_j}| \gtrsim |F|^{k(d-k+1)}$ for each $j$. Since, by \eqref{plcount}, $|\L_{R_j}| \lesssim |F|^{(k+1)(d-k)}$, we have $N \gtrsim |F|^{\beta}.$ Using Lemma \ref{classmethod} to estimate $|\bigcup_{j}\P_{R_j}|,$ we conclude
\[
|\P| \gtrsim  |F|^{k(d-k+1)+\beta}
\]
as desired.
 
\ 

\noindent{\bf Case 2, $(m < d)$:}\\
We construct sets of $k$-planes and $(k-1)$-planes using a standard ``iterated-popularity'' argument. Fix some large $C$ to be determined later. Let $\L^{\sharp,0} = \L^m$, $\P^{\sharp,0} = \P$ and for $1 \leq q \leq k$ let
\[
\P^{\sharp,q} = \{P \in \P^{\sharp,q-1} : |L \in \L^{\sharp, q-1} : P \subset L| \geq C|F|^{d-k}\}
\]
and
\[
\L^{\sharp,q} = \{L \in \L^{\sharp,q-1} : |P \in \P^{\sharp,q} : P \subset L| \geq 2^{-2q} \lambda |F|^k\}.
\]
Then either 
\begin{align} \label{firstpossibility}
|\P| &\geq 2^{-5k}\lambda |F|^k  |\L^m|/(C|F|^{d-k})
\end{align}
(in which case we are finished since the right hand side above is $\gtrsim |F|^{k(d-k+1) + \beta}$) or for each $q \leq k$
\begin{equation} \label{secondpossibility}
|\L^{\sharp,q}| \geq 2^{-3q} |\L^m|.
\end{equation}

Indeed, suppose that \eqref{firstpossibility} does not hold and that \eqref{secondpossibility} holds for $0 \leq q \leq q_0 < k$. Set 
\[
I = \{(P,L) : P \in \P_L, L \in \L^{\sharp,q_0}\}
\]
where, for each $L$, $\P_L$ is a subset of $\P^{\sharp,q_0}$ with $P \subset L$ for every $P \in \P_L$ and $2^{-2q_0} \lambda |F|^k \leq |\P_L|  < 2^{-(2q_0-1)} \lambda |F|^k.$ Note
\[
|I| \geq 2^{-2q_0} \lambda |F|^k |\L^{\sharp,q_0}| \geq 2^{-5q_0} \lambda |F|^k |\L^m|.
\]
Then letting 
\[
I' = \{(P,L) : P \in \P_L \setminus \P^{\sharp,q_0 + 1} , L \in \L^{\sharp,q_0}\}
\]
we have
\[
|I'| < C|F|^{d-k} |\P| \leq 2^{-5(k - q_0)} |I| \leq \frac{1}{2} |I|
\]
and so
\[
|\{(P,L) : P \in \P_L \cap \P^{\sharp,q_0 + 1}, L \in \L^{\sharp,q_0}\}| \geq \frac{1}{2} 2^{-2q_0} \lambda |F|^k |\L^{\sharp,q_0}|.
\]
This gives
\[
|\{(P,L) : P \in \P_L \cap \P^{\sharp,q_0 + 1}, L \in \L^{\sharp,q_0+1}\}| \geq \frac{1}{4} 2^{-2q_0} \lambda |F|^k |\L^{\sharp,q_0}|
\]
thus leading (by the upper bound on $|\P_L|$) to 
\[
|\L^{\sharp,q_0 + 1}| \geq \frac{1}{8} |\L^{\sharp,q_0}|
\]
as claimed.

For each $1 < q \leq k$ let 
\begin{align*}
\L^{\sharp,q}_{R_j} &= \{L \in \L^{\sharp,q} : L \cap R_j \text{\ is a }q\text{-plane}\} \\
\P^{\sharp,q}_{R_j} &= \{P \in \P^{\sharp,q} : P \cap R_j \text{\ is a }(q-1)\text{-plane}\}.
\end{align*}
Define $\L^{\sharp,0}_{R_j}$ as above and let $\P^{\sharp,0}_{R_j} = \{P \in \P^{\sharp,0} : P \cap R_j = \emptyset\}.$

Letting $\L_{R_j}$ be as in Lemma 2.1, fix $j$ so that $|\L^{\sharp,k}_{R_j}| \gtrsim |\L_{R_j}|.$
Applying the previously known case $d' = m-1$ of the theorem (or using the trivial estimate if $m=k$) we have
\begin{equation} \label{lequalszeroP}
|\P^{\sharp,k}_{R_j}| \gtrsim |F|^{k(m-k+1)}.
\end{equation}

Suppose for some $1 \leq q \leq k$ that  
\begin{equation} \label{equationA}
|\P^{\sharp,q}_{R_j}| \gtrsim |F|^{\omega}.
\end{equation}
Our immediate goal is to see that under certain conditions, \eqref{equationA} implies \eqref{equationB} and \eqref{equationstar} below. For each $P \in \P^{\sharp,q}_{R_j}$ there are $\geq C |F|^{d-k}$ $k$-planes from $\L^{\sharp,q-1}$ containing $P$. Since, by Lemma \ref{sccprop}, $\lesssim |F|^{m-q}$ of these $k$-planes intersect $R_j$ in $q$-planes (and there is no possibility that for a $k$-plane $L \supset P$ we have $\dim(L \cap R_j) > q$, since $\dim(L) = \dim(P) + 1$), we have
\[
|\{L \in \L^{\sharp,q-1}_{R_j} : P \subset L\}| \geq \frac{1}{2} C |F|^{d - k}
\]
provided that $C$ is chosen sufficiently large and 
\begin{equation} \label{firstiterationrequirement}
m + k-q \leq d . 
\end{equation}
Thus, using the fact (which follows from Lemma \ref{sccprop}) that for each $(q-1)$-plane $S \subset L$ there are at most $|F|^{k-q}$ $(k-1)$-planes $P$ with $S \subset P \subset L$, we have
\begin{equation} \label{equationB}
|\L^{\sharp,q-1}_{R_j}| \gtrsim C |F|^{\omega + (d-k) - (k-q)}
\end{equation}
assuming \eqref{firstiterationrequirement}. 

It follows from the definition of $\L^{\sharp,q-1}$ that for each $L \in \L^{\sharp,q-1}_{R_j}$
\[
|\{P \in \P^{\sharp,q-1}: P \subset L\}| \gtrsim |F|^k.
\]
Thus, using Lemma \ref{sccprop} to obtain
\[
|\{P \subset L : (L \cap R_j) \subset P\}| \lesssim |F|^{k-q} \ll |F|^k
\]
and (for $q > 1$) Lemma \ref{eiprop} to obtain
\begin{equation} \label{eipropapp}
|\{P \subset L : (L \cap R_j) \cap P = \emptyset\}| \lesssim |F|^{k-q+1} \ll |F|^k
\end{equation}
it follows that
\begin{equation} \label{rieqn}
|\{P \in \P^{\sharp,q-1}_{R_j}: P \subset L\}| \gtrsim |F|^k.
\end{equation}
Estimate \eqref{rieqn} also holds when $q=1$, since the special definition of $\P^{\sharp,0}_{R_j}$ means that we do not need to use \eqref{eipropapp}.

Applying Lemma \ref{folprop}
gives $(m + k -q  + 1)$-planes $T_i$ containing $R_j$. Let
\begin{align*}
\L_i &= \{L \in \L^{\sharp,q-1}_{R_j} : L \subset T_i\} \\ 
\P_i &= \bigcup_{L \in \L_i}\{P \in \P^{\sharp,q-1}_{R_j}: P \subset L\}.
\end{align*}
Then assuming 
\begin{equation} \label{seconditerationrequirement}
m + k -q + 1\leq d
\end{equation}
we have
\begin{align}
\nonumber |\P^{\sharp,q-1}_{R_j}| &\geq \sum_i |\P_i| \\ 
\nonumber&\gtrsim \sum_i |\L_i|/|F|^{m-q-k} \\ 
\nonumber&\geq |\L^{\sharp,q-1}_{R_j}|/|F|^{m-q - k}\\ 
\label{equationstar}&\gtrsim C |F|^{\omega + d - (m+k-q) + q}
\end{align}
where, for the second inequality, we used the fact (which follows from Lemma \ref{pslemma}) that $|\L_i| < |F|^{(k+1)(m -q) + k} $ to see that for some $d' \leq m+k-q$ we can estimate $|\P_i|$ using the previously known case $d'$ of the theorem.

Starting with \eqref{lequalszeroP} and iterating the fact that \eqref{equationA} implies \eqref{equationB} and \eqref{equationstar}, we obtain for $0 \leq q \leq k$
\[
|\P^{\sharp,q}_{R_j}| \gtrsim C^{k-q} |F|^{k + q(m-k) + (k-q)d - (k-q)(k-q-1)}
\]
if $m + k-q \leq d$ and
\[
|\L^{\sharp,q}_{R_j}| \gtrsim C^{k-q} |F|^{k + (q+1)(m-k) + (k-q-1)d - (k-q-1)(k-q-2) + (d - k) - (k-q-1)} 
\]
if $m + k-q - 1 \leq d.$

So if $m+k \leq d$ we have
\[
|\P| \geq |\P^{\sharp,0}_{R_j}| \gtrsim |F|^{k(d - k + 1) + k}
\]
as desired, and otherwise we have
\begin{equation} \label{toomanykplanes}
|\L| \geq |\L^{\sharp,k-(d + 1 - m)}_{R_j}| \gtrsim C^{d+1-m} |F|^{(k+1)(d-k) + k}.
\end{equation}
Choosing $C$ sufficiently large depending on the relevant implicit constants (none of which depended on $C$), we see that \eqref{toomanykplanes} contradicts the assumption \eqref{refinedk} and so we must have \eqref{firstpossibility}.
\end{proof}

\bibliographystyle{amsplain}
\bibliography{roberlin}
\end{document}